\newtheorem{theorem}{Theorem}[section]
\newtheorem{lemma}[theorem]{Lemma}
\newtheorem{proposition}[theorem]{Proposition}
\newtheorem{definition}[theorem]{Definition}
\numberwithin{equation}{section}
\begin{document}

\newcommand{\cc}{\mathfrak{c}}
\newcommand{\N}{\mathbb{N}}
\newcommand{\Q}{\mathbb{Q}}
\newcommand{\C}{\mathbb{C}}
\newcommand{\Z}{\mathbb{Z}}
\newcommand{\R}{\mathbb{R}}
\newcommand{\T}{\mathbb{T}}
\newcommand{\I}{\mathcal{I}}
\newcommand{\CC}{\mathcal{C}}
\newcommand{\J}{\mathcal{J}}
\newcommand{\F}{\mathcal{F}}
\newcommand{\G}{\mathcal{G}}
\newcommand{\A}{\mathcal{A}}
\newcommand{\HH}{\mathcal{H}}
\newcommand{\K}{\mathcal{K}}
\newcommand{\B}{\mathcal{B}}
\newcommand{\st}{*}
\newcommand{\PP}{\mathbb{P}}
\newcommand{\SSS}{\mathbb{S}}
\newcommand{\forces}{\Vdash}
\newcommand{\dom}{\text{dom}}
\newcommand{\osc}{\text{osc}}
\newcommand\encircle[1]{%
  \tikz[baseline=(X.base)] 
    \node (X) [draw, shape=circle, inner sep=0] {\strut #1};}

\title[A non-stable $C^*$-algebra]{A non-stable $C^*$-algebra
with an elementary essential 
 composition series}

\address{Institute of Mathematics, Polish Academy of Sciences,
ul. \'Sniadeckich 8,  00-656 Warszawa, Poland}

\author{Saeed Ghasemi}
\curraddr{Institute of Mathematics of the Czech Academy of Sciences, \v Zitn\'a
25, 115 67 Praha 1, Czech Republic }
\email{\texttt{ghasemi@math.cas.cz}}

\author{Piotr Koszmider}
\address{Institute of Mathematics, Polish Academy of Sciences,
ul. \'Sniadeckich 8,  00-656 Warszawa, Poland}
\email{\texttt{piotr.koszmider@impan.pl}}
\thanks{The research of the second named author was partially supported by   grant
PVE Ci\^encia sem Fronteiras - CNPq (406239/2013-4).}

\begin{abstract} 
A $C^*$-algebra $\A$ is said to be stable if it is isomorphic to $\A \otimes \K(\ell_2)$.
Hjelmborg and R\o rdam have shown that  countable inductive limits of separable stable $C^*$-algebras are stable.
We  show that this is no longer true in the nonseparable
context even for the most natural case of an uncountable inductive limit of
an increasing chain of separable stable and AF  ideals: we construct a GCR,  AF (in fact, scattered) subalgebra $\A$
of $\B(\ell_2)$,  which is the inductive limit of length $\omega_1$  of its separable stable ideals 
$\I_\alpha$ ($\alpha<\omega_1$) satisfying
$\I_{\alpha+1}/\I_\alpha\cong \K(\ell_2)$ for each $\alpha<\omega_1$, while $\A$ is not stable. 
The sequence $(\I_\alpha)_{\alpha\leq\omega_1}$  is the GCR composition
series of $\A$ which in this case coincides with the Cantor-Bendixson
composition series as a scattered $C^*$-algebra.
$\A$ has the property that all of its proper two-sided ideals are listed  as $\I_\alpha$s for some $\alpha<\omega_1$
and therefore the family of stable ideals of $\A$ has no  maximal element.

By taking $\A'=\A\otimes \K(\ell_2)$ we obtain a stable $C^*$-algebra with analogous composition series 
$(\J_\alpha)_{\alpha<\omega_1}$
whose ideals
$\J_\alpha$s are isomorphic to $\I_\alpha$s for each $\alpha<\omega_1$.
In particular, there are nonisomorphic scattered $C^*$-algebras whose GCR composition
series   $(\I_\alpha)_{\alpha\leq\omega_1}$ satisfy $\I_{\alpha+1}/\I_\alpha\cong \K(\ell_2)$
for all $\alpha<\omega_1$, for which the composition series differ first at $\alpha=\omega_1$.

\end{abstract}

\maketitle

\section{introduction}
\begin{definition}\label{elem-ess} Let $\A$ be a $C^*$-algebra and $\beta$ 
be an ordinal. A sequence  of ideals $(\I_\alpha)_{\alpha\leq\beta}$  of $\A$ is 
called an elementary essential composition series for $\A$
of length $\beta$ if and only if 

\begin{enumerate}[(a)]
\item $\I_0=\{0\}$, $\I_{\beta}=\A$, $\I_\alpha\subseteq \I_{\alpha'}$ for $\alpha\leq\alpha'\leq\beta$,
\item $\I_\lambda=\overline{\bigcup_{\alpha<\lambda}\I_\alpha}$ for all limit ordinals $\lambda\leq\beta$,
\end{enumerate}
For every $\alpha<\beta$
\begin{enumerate}[(c)]
\item[(c)]   $\I_{\alpha+1}/\I_\alpha$ is an essential ideal of  $\A/\I_\alpha$,
\item[(d)] $\I_{\alpha+1}/\I_\alpha$ is isomorphic to $\K(\ell_2)$.

\end{enumerate}
\end{definition}
\begin{definition} A $C^*$-algebra $\A$ is called stable if it is isomorphic to
$\A\otimes \K(\ell_2)$.
\end{definition}

The purpose of this article is to 
prove the following:

\begin{theorem}\label{main0}
There is a nonstable $C^*$-subalgebra of $\B(\ell_2)$
which has an elementary essential composition series $(\I_\alpha)_{\alpha\leq\omega_1}$,
where $\omega_1$ is the first uncountable cardinal.
\end{theorem}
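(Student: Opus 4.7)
The plan is to construct $\A$ explicitly as a $C^*$-subalgebra of $\B(\ell_2)$ by transfinite recursion of length $\omega_1$, producing the ideals $\I_\alpha$ along the way. Since one must pack $\omega_1$-many nested ideals into the separable Hilbert space $\ell_2$, the recursion is driven by a combinatorial skeleton on $\omega_1$---most naturally an almost disjoint family $\{A_\xi : \xi<\omega_1\}$ of infinite subsets of $\N$, possibly arranged in a coherent tree, together with a ladder system through the countable limit ordinals. Operators supported on the blocks cut out by this skeleton serve as the generators of the successive ideals.

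The recursion itself is routine to set up. Start with $\I_0=0$ and $\I_1=\K(\ell_2)$. At a successor stage $\alpha+1$, adjoin to $\I_\alpha$ a countable family of partial isometries and projections in $\B(\ell_2)$ associated to the $\alpha$-th piece of the skeleton, chosen so that modulo $\I_\alpha$ they generate a copy of $\K(\ell_2)$; essentiality of $\I_{\alpha+1}/\I_\alpha$ in the eventual $\A/\I_\alpha$ is enforced by arranging that every generator introduced at a later stage $\beta>\alpha$ has nonzero product modulo $\I_\alpha$ with this block. At countable limits $\lambda$ put $\I_\lambda=\overline{\bigcup_{\alpha<\lambda}\I_\alpha}$, and finally let $\A=\overline{\bigcup_{\alpha<\omega_1}\I_\alpha}$. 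Each $\I_\alpha$ for $\alpha<\omega_1$ is separable and AF by construction; stability is maintained at successor stages by inserting the new generators with infinite multiplicity so that $\I_{\alpha+1}\cong \I_\alpha\otimes\K(\ell_2)\cong \I_\alpha$, and at countable limit stages stability of $\I_\lambda$ follows from the Hjelmborg--R{\o}rdam theorem recalled in the abstract, since $\I_\lambda$ is a countable inductive limit of separable stable $C^*$-algebras.

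The main obstacle is showing that $\A$ itself is not stable. The strategy is to argue that $\A$ and $\A\otimes\K(\ell_2)$ are not isomorphic, even though their composition series coincide stagewise below $\omega_1$ (since each $\I_\alpha$ is already stable). Concretely, one argues by contradiction: if $\varphi\colon \A\to\A\otimes\K(\ell_2)$ were a $*$-isomorphism, then by a closing-off argument applied to the Cantor--Bendixson filtration---which is definable from the algebra and hence preserved by $\varphi$---there would be a closed unbounded $C\subseteq \omega_1$ such that $\varphi(\I_\alpha)$ equals the corresponding member of the composition series of $\A\otimes\K(\ell_2)$ for every $\alpha\in C$. The extension data of the two towers along $C$ must then match, but the construction is arranged so that these data are rigidly distinct at the $\omega_1$-th level: the way in which $\I_\alpha$ sits inside $\I_\beta$ in $\A$ carries information inherited from the underlying AD family that is washed out after tensoring with $\K(\ell_2)$. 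Isolating a concrete invariant of the tower that records this difference---and verifying that it remains rigid under the freedom of choosing embeddings---is the heart of the proof.

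Finally one verifies that every proper closed two-sided ideal of $\A$ is of the form $\I_\alpha$ for some $\alpha<\omega_1$, so that the constructed series is the full Cantor--Bendixson series of $\A$; in particular, the family of stable ideals of $\A$ admits no maximal element, as asserted in the abstract.
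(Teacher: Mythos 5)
Your recursion scheme (start with $\K(\ell_2)$, adjoin a new copy of $\K(\ell_2)$ modulo the previous ideal at each successor, take closures of unions at limits, keep each $\I_\alpha$ separable stable AF via Hjelmborg--R{\o}rdam) matches the paper's Theorem \ref{pairing} in outline. But the proposal has a genuine gap exactly where you yourself flag it: the non-stability of $\A$. You propose to show $\A\not\cong\A\otimes\K(\ell_2)$ by a closing-off argument along a club and then by comparing ``extension data'' of the two towers, and you write that isolating the concrete invariant that distinguishes them ``is the heart of the proof.'' That invariant is precisely what is missing, and without it nothing has been proved; note also that no club argument is needed, since all proper ideals are the $\I_\alpha$'s and the Cantor--Bendixson filtration is definable, so any isomorphism already matches the towers levelwise --- the difficulty is entirely in what to do after that.

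The paper's route is different and more direct. It does not compare $\A$ with $\A\otimes\K(\ell_2)$ at all; instead it extracts from stability alone a structural consequence and refutes it. If $\A$ were stable, then (Lemma \ref{orthogonal-sequence}) writing $\A\cong\A\otimes\K(\ell_2)$ and taking $R''_\alpha\otimes R$, $R''_\alpha\otimes Q$ for orthogonal $R,Q\in\K(\ell_2)$ yields two families of projections $(R_\alpha)$, $(Q_\alpha)$ with $ht(R_\alpha)=ht(Q_\alpha)=\alpha+1$ and $R_{\alpha_1}Q_{\alpha_2}=0$ for all $\alpha_1,\alpha_2<\omega_1$. The construction is rigged so that this is impossible: each element of height $\alpha+1$ has, modulo $P_\alpha\cdot P_\alpha$ and compacts, a nonzero component $A_\alpha$ in the block algebra $C^*(\mathcal S_\alpha)$ (Lemma \ref{lemma-domination}), and the \emph{Luzin} property of the blockwise system of almost matrix units forces $\|(A_\alpha+F)(B_\beta+G)\|$ to be close to $\|A_\alpha\|\|B_\beta\|$ for some pair $\alpha,\beta$, contradicting orthogonality. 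This is the noncommutative analogue of the Simon--Weese non-separation argument for Luzin almost disjoint families. Relatedly, your combinatorial input is underspecified: an arbitrary almost disjoint family with a ladder system will not do. You need a Luzin family with the additional interlocking property of Theorem \ref{complicated-ad}(5), which is what makes the Kronecker-type norm computation (Lemma \ref{kronecker}) applicable to products of elements coming from two different blocks. Without specifying and using that property, the non-stability claim cannot be closed.
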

\begin{proof} Combine Theorems \ref{luzin-existence}, \ref{pairing} and \ref{main}. 
\end{proof}

Let us discuss several aspects of this construction. 
First recall that a
  $C^*$-algebra $\A$
is called approximately finite dimensional (AF) if $\A$ contains a directed family of finite-dimensional
subalgebras whose union is dense in $\A$, and it is called
locally-finite dimensional (LF) if every finite subset of $\A$ can be approximated from a finite-dimensional
subalgebra of $\A$. See  the paper \cite{farah-katsura} of Farah and Katsura for the
results showing the relations between these notions in the nonseparable case. 
In particular,   AF implies LF, and being  AF is equivalent to being LF for separable $C^*$-algebras by a result of Bratteli \cite{bratteli}.
More generally, for $C^*$-algebras of  density $\omega_1$, like our algebra from
Theorem \ref{main0}, by a result of Farah and Katsura (\cite{farah-katsura}) 
the two notions of AF and LF are equivalent as well. However, they also showed that the two notions are not equivalent in general.
 
Now let us list a few classical results concerning separable $C^*$-algebras which are relevant to our construction:
\begin{enumerate}
\item An extension of a separable AF-algebra by a separable AF-algebra algebra is a separable AF-algebra
 (Brown \cite{brown}, cf. \cite{davidson}).
\item Countable inductive limit of separable AF-algebras are  AF-algebras (Bratteli \cite{bratteli}, cf. \cite{davidson}).
\item An extension of a separable  stable AF-algebra by a separable stable AF-algebra is stable  
(Blackadar \cite{blackadar-af}, cf. 6.12 of \cite{rordam-stable}, 7.3 of \cite{cb}).
\item Countable inductive limits of separable stable algebras are  stable  (Hjelmborg and R\o rdam  4.1 of \cite{HR}).
\end{enumerate}

Combining these results one concludes that all ideals $\I_\alpha$  for $\alpha<\omega_1$ 
from the elementary essential composition series of our algebra $\A$ from Theorem \ref{main0}
are separable stable AF-algebras. So we observe a strong failure of the permanence of stability
in the context of the simplest uncountable inductive limits of separable AF-algebras:

\begin{theorem}\label{inductive-limit} There is an AF-algebra which is an uncountable inductive limit of an increasing chain of 
 separable stable 
AF-ideals
which is not stable.
\end{theorem}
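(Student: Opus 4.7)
The plan is to deduce Theorem \ref{inductive-limit} from Theorem \ref{main0} by transfinite induction using the classical permanence results (1)--(4) listed above, and then to verify that $\A$ itself is AF. The substantive work is already done in constructing $\A$ (Theorems \ref{luzin-existence}, \ref{pairing}, and \ref{main}); the present statement is essentially a qualitative corollary harvesting information about the composition series.

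First, I would fix $\A$ from Theorem \ref{main0} with its elementary essential composition series $(\I_\alpha)_{\alpha\leq\omega_1}$. Condition (b) of Definition \ref{elem-ess} applied at $\lambda=\omega_1$ gives $\A=\overline{\bigcup_{\alpha<\omega_1}\I_\alpha}$, so $\A$ is an inductive limit of length $\omega_1$ of the chain $(\I_\alpha)_{\alpha<\omega_1}$, which is strictly increasing because every quotient $\I_{\alpha+1}/\I_\alpha\cong\K(\ell_2)$ is nonzero. In particular $\A$ has density character $\omega_1$. I would then prove by transfinite induction on $\alpha<\omega_1$ that each $\I_\alpha$ is a separable stable AF-algebra. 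The successor step uses the short exact sequence
\[
0\to\I_\alpha\to\I_{\alpha+1}\to\K(\ell_2)\to 0,
\]
combined with item (1) (Brown) to preserve AF and item (3) (Blackadar) to preserve stability; separability is automatic since $\K(\ell_2)$ and (by induction) $\I_\alpha$ are separable. The limit step for $\lambda<\omega_1$ presents $\I_\lambda$ as a \emph{countable} inductive limit of separable stable AF-algebras, which is again separable stable AF by items (2) (Bratteli) and (4) (Hjelmborg--R\o rdam).

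It remains to verify that $\A$ is AF. Any finite $F\subseteq\A$ can be $\varepsilon$-approximated from $\bigcup_{\alpha<\omega_1}\I_\alpha$, and by regularity of $\omega_1$ from a single $\I_\alpha$, which is AF; hence $F$ is $\varepsilon$-approximated by a finite-dimensional subalgebra of $\I_\alpha\subseteq\A$. So $\A$ is LF, and since $\A$ has density $\omega_1$, the Farah--Katsura theorem quoted in the introduction gives LF $=$ AF, so $\A$ is AF. Nonstability is delivered directly by Theorem \ref{main0}. The one conceptually subtle point--and effectively the only obstacle in the deduction--is this final step: Bratteli's classical equivalence of AF and LF does not extend to arbitrary nonseparable algebras, so the AF-ness of the length-$\omega_1$ limit $\A$ must explicitly invoke the Farah--Katsura extension valid in density $\omega_1$, rather than a naive passage to the limit from (2); all other steps reduce to straightforward application of (1)--(4).
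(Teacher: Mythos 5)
Your proposal is correct and follows essentially the same route as the paper: the paper's (implicit) proof is precisely the transfinite induction combining (1)--(4) to show each $\I_\alpha$ ($\alpha<\omega_1$) is a separable stable AF-ideal, with nonstability of $\A=\overline{\bigcup_{\alpha<\omega_1}\I_\alpha}$ supplied by Theorem \ref{main0}, and AF-ness of $\A$ itself obtained from LF-ness plus the Farah--Katsura equivalence in density $\omega_1$ (the paper reaches LF-ness via scatteredness, you reach it directly from the union of AF subalgebras -- a negligible difference). You also correctly identify the one genuinely delicate point, namely that Bratteli's AF $=$ LF does not pass naively to the length-$\omega_1$ limit and the Farah--Katsura result must be invoked.
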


One should mention here other recent
results concerning the failure 
of the permanence of stability and/or being AF for
quite fundamental nonseparable $C^*$-algebras: 
\begin{itemize}
\item There are $2$-subhomogenous non-AF extensions  
of nonseparable AF-algebras by AF-algebras (Theorem 1.12 of \cite{aiaus}).
\item There are  nonstable  $C^*$-algebras $\A$ satisfying 
the following short exact sequence:
$$0\rightarrow \mathcal K(\ell_2)\xrightarrow{\iota} \mathcal A 
\rightarrow\mathcal K(\ell_2({2^\omega}))\rightarrow 0,$$
where $\iota[\mathcal K(\ell_2)]$ is an essential ideal of
$\mathcal A$ (\cite{mrowka}).  
\end{itemize}
A partial version of Theorem \ref{inductive-limit} 
for an inductive limit  of length possibly bigger than
the first uncountable ordinal $\omega_1$ of possibly nonseparable $C^*$-algebras was obtained in Theorem 7.7. of \cite{cb}.

Another aspect of our construction is related to the class of scattered $C^*$-algebras.
They probably first appeared in a paper \cite{tomiyama} of Tomiyama, but were first explicitly defined
by H. Jensen in \cite{jensen}. Some of many equivalent conditions defining scattered $C^*$-algebras
are: every nonzero quotient has a minimal projection or the spectrum of every self-adjoint element is at most
countable (\cite{wojtaszczyk}) or every subalgebra is LF \cite{kusuda-af}. For a recent survey on
scattered $C^*$-algebras see \cite{cb}. Commutative scattered $C^*$-algebras
are exactly of the form $C_0(X)$ where $X$ is locally compact and scattered, i.e., its every
nonempty (closed) subset has a relatively isolated point. In fact, such spaces must be totally disconnected
and so, by the Stone duality they correspond exactly to superatomic Boolean algebras (\cite{roitman-handbook}).
In \cite{cb} we introduced a canonical
composition series $(\I_\alpha(\A))_{\alpha\leq ht(\A)}$ for a
 scattered $C^*$-algebra $\A$ which corresponds to the Cantor-Bendixson derivatives 
in the commutative case.  $\I_{\alpha+1}(\A)$ is defined by requiring
that  $\I_{\alpha+1}(\A)/\I_\alpha(\A)$ is the subalgebra generated in $\A/\I_\alpha(\A)$ 
by all minimal projections. In a scattered $C^*$-algebra the ideals $\I_{\alpha+1}(\A)/\I_\alpha(\A)$
 for $\alpha<ht(\A)$  are essential in $\A/\I_\alpha(\A)$ and
isomorphic to nondegenerate subalgebras of $\K(\ell_2(\kappa_\alpha))$ for some cardinals $\kappa_\alpha$.
Moreover the algebra $\A$  is scattered if and only if $\A=\I_{ht(\A)}(\A)$ for some ordinal $ht(\A)$
called the height of the scattered algebra. The width of  $\A$ is defined as 
$\sup\{\kappa_\alpha: \alpha<ht(\A)\}$. Following the commutative convention, a scattered
$C^*$-algebra of height $\omega_1$ and width $\omega$ is called thin-tall. Then
a superatomic Boolean algebra is thin-tall if its Stone space is thin-tall and a locally
compact  Hausdorff space $X$ is  thin-tall if $C_0(X)$ is a thin-tall $C^*$-algebra.

 Thin-tall locally compact Hausdorff spaces found an impressive amount
of applications in topology  usually as versions of the Ostaszewski space or the Kunen line
(\cite{ostaszewski}, \cite{roitman-toronto}, \cite{negrepontis}).
In fact the Banach spaces $C_0(X)$ where $X$ are thin-tall play an important role in 
the Banach space theory as well (\cite{kunen-shelah}, \cite{negrepontis}, \cite{biorthogonal}). 
Our construction shows that,  as in the commutative case,  maximally noncommutative 
thin-tall algebras do not need not be isomorphic to each other. To state precisely  a corollary of our
construction we need a notion of a fully noncommutative scattered  $C^*$-algebra:

\begin{definition}[\cite{cb} Definition 6.1] In the above notation a scattered $C^*$-algebra is fully noncommutative
if and only if the consecutive quotients $\I_{\alpha+1}(\A)/\I_\alpha(\A)$ 
for $\alpha<ht(\A)$ of the Cantor-Bendixson composition series $(\I_\alpha(\A))_{\alpha\leq ht(\A)}$
are isomorphic to the algebras of all compact operators $\K(\ell_2(\kappa_\alpha))$, 
respectively.
\end{definition}

\begin{lemma}\label{series-scattered}
Suppose that  $\beta$ is an ordinal,  $\A$ is a $C^*$-algebra and  $(\I_\alpha)_{\alpha\leq\beta}$
is a sequence of its ideals. $(\I_\alpha)_{\alpha\leq\beta}$ is
 an elementary essential 
composition series for $\A$ if and only if  $\A$ is scattered fully noncommutative
with Cantor-Bendixson composition series equal to $(\I_\alpha)_{\alpha\leq\beta}$
and $\I_{\alpha+1}/\I_\alpha\cong \K(\ell_2)$ for every $\alpha<\beta$.
\end{lemma}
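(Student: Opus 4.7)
The plan is to reduce the equivalence to a single structural observation about essential ideals isomorphic to $\K(\ell_2)$, and then push everything through by transfinite induction along the composition series. The backward direction is immediate: if $(\I_\alpha)_{\alpha\leq\beta}$ is the Cantor-Bendixson composition series of a scattered fully noncommutative $\A$, then (a) and (b) of Definition \ref{elem-ess} are just the inductive definition of the CB series recalled in the introduction, (c) is the essentiality of consecutive CB quotients stated in the same discussion, and (d) is the additional hypothesis.

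For the forward direction, assume $(\I_\alpha)_{\alpha\leq\beta}$ satisfies (a)--(d) and prove by transfinite induction on $\alpha\leq\beta$ that $\I_\alpha$ coincides with the $\alpha$-th CB ideal $\I_\alpha(\A)$. The base case is trivial and the limit case follows from (b) together with the way the CB series is formed at limit stages. The successor step reduces to the following observation, which I regard as the main content of the lemma: if $\B$ is any $C^*$-algebra and $J\subseteq \B$ is an essential ideal with $J\cong \K(\ell_2)$, then $J$ coincides with the subalgebra $\I_1(\B)$ of $\B$ generated by all minimal projections of $\B$. Applying this to $\B:=\A/\I_\alpha$ and $J:=\I_{\alpha+1}/\I_\alpha$ advances the induction. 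Once complete, $\A=\I_\beta=\I_\beta(\A)$ shows that $\A$ is scattered of height $\beta$, and (d) rewrites as $\I_{\alpha+1}(\A)/\I_\alpha(\A)\cong \K(\ell_2)=\K(\ell_2(\omega))$ for every $\alpha<\beta$, which is precisely full noncommutativity.

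The main obstacle is thus verifying the observation. For $J\subseteq \I_1(\B)$, each rank-one projection $p\in J\cong \K(\ell_2)$ stays minimal inside $\B$: any projection $q\leq p$ in $\B$ lies in $J$ by the ideal property, and hence equals $0$ or $p$ by minimality of $p$ inside $J$. Since $J$ is the closed linear span of its rank-one projections, this yields $J\subseteq \I_1(\B)$. Conversely, given a minimal projection $p\in \B$, the hereditary subalgebra $pJp$ sits inside $p\B p=\C p$ and is therefore either $0$ or $\C p$. The case $pJp=0$ gives $(pJ)(pJ)^\st\subseteq pJp=0$, whence $pJ=0$, contradicting essentiality of $J$. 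Hence $pJp=\C p$, and in particular $p\in J$, so every minimal projection of $\B$ lies in $J$ and $\I_1(\B)\subseteq J$. Everything else in the proof is routine bookkeeping along the transfinite induction.
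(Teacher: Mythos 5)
Your proof is correct and follows essentially the same route as the paper: both arguments reduce the forward direction to the fact that an essential ideal of $\A/\I_\alpha$ isomorphic to (a subalgebra of) $\K(\ell_2)$ must coincide with the ideal generated by the minimal projections of $\A/\I_\alpha$, and both obtain the backward direction from the standard properties of the Cantor--Bendixson series. The only difference is that where the paper cites Theorem 1.2, Theorem 1.4 and Proposition 4.3 of the companion paper on scattered $C^*$-algebras, you supply a short self-contained verification of the key identification $J=\I^{At}(\B)$, which is a welcome but not substantively different addition.
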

\begin{proof}
For the forward implication, by Definition 1.3 and Theorem 1.4 of \cite{cb}
we need to note that  the ideal $\I_{\alpha+1}/\I_\alpha$ of  $\A/\I_\alpha$ 
is equal to the ideal $\I^{At}(\A/\I_\alpha)$ generated by the minimal projections of $\A/\I_\alpha$.
For this, by Theorem 1.2 of \cite{cb} it is enough to note that $\I_{\alpha+1}/\I_\alpha$ 
is essential in  $\A/\I_\alpha$  and isomorphic to a subalgebra of $\K(\ell_2)$ which is
the case by Definition \ref{elem-ess} (c), (d).

For the backward implication we apply Theorem 1.4 of \cite{cb} to obtain (a), (b) of 
Definition \ref{elem-ess}, Proposition 4.3 and Theorem 1.4 to obtain (c) and  the definition
of being fully noncommutative to obtain (d).
\end{proof}

It turns out that being fully noncommutative is equivalent to a strong noncommutativity
condition, namely that the center of the multiplier algebra of any quotient is trivial (Proposition 6.3 of \cite{cb}).
Also for a fully noncommutative scattered $C^*$-algebra  the Cantor-Bendixson composition series
coincides with the GCR composition series (Proposition 6.4 of \cite{cb}) and
full noncommutativity is equivalent to the stability for separable scattered $C^*$-algebras
such that $\kappa_\alpha=\omega$ for all $\alpha<ht(\A)$
(Lemma 7.3 of \cite{cb}).  So our construction yields the
following:

\begin{theorem} There are two scattered thin-tall fully noncommutative $C^*$-algebras $\A$ and $\B$
with the Cantor-Bendixson composition series $(\I_\alpha(\A))_{\alpha\leq\omega_1}$ and
$(\I_\alpha(\B))_{\alpha\leq\omega_1}$ such that $\I_\alpha(\A)$ is isomorphic to $\I_\alpha(\B)$
for every $\alpha<\omega_1$ but $\A$ is not isomorphic to $\B$, namely $\B$ is stable and $\A$
is not.
\end{theorem}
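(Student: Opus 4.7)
The plan is to take $\A$ to be the $C^*$-algebra from Theorem \ref{main0} and set $\B := \A \otimes \K(\ell_2)$. Then $\B$ is stable because $\K(\ell_2) \otimes \K(\ell_2) \cong \K(\ell_2)$, whereas $\A$ is nonstable by Theorem \ref{main0}, so in particular $\A \not\cong \B$. The remaining work is to identify the Cantor--Bendixson composition series of $\B$ and to match it with that of $\A$ at every countable stage.

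The natural candidate is $\J_\alpha := \I_\alpha(\A) \otimes \K(\ell_2)$, and I would verify that $(\J_\alpha)_{\alpha \leq \omega_1}$ is an elementary essential composition series for $\B$. Conditions (a) and (b) of Definition \ref{elem-ess} are immediate from the corresponding properties of $(\I_\alpha(\A))$ because tensoring with the nuclear algebra $\K(\ell_2)$ is exact and commutes with inductive limits. Condition (d) follows from
$$\J_{\alpha+1}/\J_\alpha \cong (\I_{\alpha+1}(\A)/\I_\alpha(\A)) \otimes \K(\ell_2) \cong \K(\ell_2) \otimes \K(\ell_2) \cong \K(\ell_2).$$
Condition (c), the essentiality, will follow from the standard fact that, since $\K(\ell_2)$ is simple and nuclear, every closed two-sided ideal of $(\A/\I_\alpha(\A)) \otimes \K(\ell_2) \cong \B/\J_\alpha$ is of the form $L \otimes \K(\ell_2)$ for a closed two-sided ideal $L$ of $\A/\I_\alpha(\A)$, so essentiality of $\I_{\alpha+1}(\A)/\I_\alpha(\A)$ in $\A/\I_\alpha(\A)$ transfers to essentiality of $\J_{\alpha+1}/\J_\alpha$ in $\B/\J_\alpha$. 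By Lemma \ref{series-scattered}, $(\J_\alpha)_{\alpha \leq \omega_1}$ is then the Cantor--Bendixson composition series of $\B$, and $\B$ is scattered, thin-tall, and fully noncommutative.

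To conclude, I would establish $\I_\alpha(\A) \cong \J_\alpha$ for every $\alpha < \omega_1$ by showing that each $\I_\alpha(\A)$ below $\omega_1$ is separable and stable, so that $\I_\alpha(\A) \otimes \K(\ell_2) \cong \I_\alpha(\A)$. Separability is built into the construction of Theorem \ref{main0}; stability is inherited by transfinite induction along the composition series from facts (3) and (4) of the introduction, applied at successor and limit stages respectively. I do not expect any serious obstacle beyond this, since the deep content is already absorbed into Theorem \ref{main0} and Lemma \ref{series-scattered}; the only subtle routine point to write out carefully is the transfer of essentiality under tensoring in condition (c).
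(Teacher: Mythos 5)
Your proposal is correct and follows essentially the same route as the paper: take $\B=\A\otimes\K(\ell_2)$, identify its Cantor--Bendixson composition series as $(\I_\alpha(\A)\otimes\K(\ell_2))_{\alpha\leq\omega_1}$, and use the stability of the separable ideals $\I_\alpha(\A)$ (via facts (3) and (4) of the introduction) to get $\I_\alpha(\A)\otimes\K(\ell_2)\cong\I_\alpha(\A)$ for $\alpha<\omega_1$. The only cosmetic difference is that the paper cites Proposition 5.3 of \cite{cb} to identify the composition series of the tensor product, whereas you verify the axioms of Definition \ref{elem-ess} directly (using exactness and the ideal structure of tensoring with the simple nuclear algebra $\K(\ell_2)$) and then invoke Lemma \ref{series-scattered}; both are valid.
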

\begin{proof}
Let $\A$ be the algebra satisfying Theorem \ref{main0} with an elementary essential composition
series  $(\I_\alpha)_{\alpha\leq\omega_1}$. 
By Lemma \ref{series-scattered} the algebra $\A$ is scattered thin-tall fully noncommutative
whose Cantor-Bendixson composition series is $(\I_\alpha)_{\alpha\leq\omega_1}$.

Consider $\B=\A\otimes\K(\ell_2)$. It follows from Proposition 5.3 of \cite{cb} that $\B$ is scattered
of the same height $\omega_1$ whose Cantor-Bendixson composition series $(\I_\alpha(\B))_{\alpha\leq\omega_1}$ satisfies
 $\I_{\alpha}(\B)=\I_\alpha\otimes \K(\ell_2)$ for all $\alpha\leq\omega_1$. 
 But $\I_\alpha\otimes \K(\ell_2)$ is isomorphic to $\I_\alpha(\A)$ for $\alpha<\omega_1$
by the stability of $\I_\alpha(\A)$ as observed after (1) - (4).  Proposition 5.3. of \cite{cb} also implies
that $\I_{\alpha+1}(\B)/\I_\alpha(\B)$ is isomorphic to $\K(\ell_2)\otimes \K(\ell_2)\cong \K(\ell_2)$
for every $\alpha<\omega_1$  which gives
that $\B$ is scattered thin-tall fully noncommutative.  However, Theorem \ref{main0} yields that $\A$ is not stable
while $\B$ is trivially stable.
\end{proof}

In fact, our construction uses similar combinatorial ideas as the first absolute construction of
two nonisomorphic  thin-tall superatomic Boolean algebras from \cite{simon-weese} due to Simon and Weese
(cf. \cite{roitman-handbook}).  The latter corresponds to a locally compact thin-tall $X$ which cannot be
written as a disjoint union  $X=X_1\cup X_2$ where both of $X_1$ and $X_2$ are clopen and nonmetrizable.
On the other hand, $X\times\N$ is also thin-tall locally compact but can be written as a disjoint union as above.
In analogy to the above
property, in our algebra  $\A$ there are no two nonseparable subalgebras
$\A', \A''\subseteq \A$ such that $A'A''=0$ for all $A'\in \A'$ and $A''\in \A''$ (cf. 
Lemma \ref{orthogonal-sequence}, Theorem \ref{main}).
The combinatorial idea behind these examples is to use  a Luzin almost disjoint family of subsets
of $\N$ (\cite{luzin}, cf. \cite{hrusak}) to prevent
the algebra from splitting. Recall that a family $\mathcal F\subseteq \wp(\N)$ is called
Luzin if it has cardinality $\omega_1$, it is almost disjoint i.e., $A\cap B$ is finite for
any two distinct $A, B\in \mathcal F$ and there are no separations of uncountable subfamilies
i.e., given two disjoint uncountable $\mathcal F', \mathcal F''\subseteq \mathcal F$ 
there is no $C\subseteq \N$ such that $A\setminus C$ and $B\cap C$ are both finite
for all $A\in \mathcal F'$ and $B\in \mathcal F''$.
In our case we need additional properties of a Luzin family (Theorem \ref{complicated-ad}) which
are combinatorially interesting by themselves 
 and are published elsewhere (\cite{hgk}).
Such a family of subsets of $\N$ yields a system of noncompact operators  in $\B(\ell_2)$
which we call a Luzin blockwise system of almost matrix units (Definitions \ref{blockwise-def} and \ref{blockwise-luzin-def}).
Its behaviour can be expressed in a  manner similar to the commutative one described above
($=^\K$ denotes the equality modulo compact operators):

\begin{theorem}
There is a sequence $(\A_\alpha)_{\alpha<\omega_1}$ of $C^*$-subalgebras of $\B(\ell_2)$
which are all isomorphic to $\K(\ell_2)$ and
which are pairwise almost orthogonal, i.e., $AA'=^\K0$ for all $A\in \A_\alpha$, $A'\in \A_{\alpha'}$
for any $\alpha<\alpha'<\omega_1$ with the following property:

Given any two
uncountable $X, Y\subseteq\omega_1$ and any choice of
$A_\alpha\in \A_\alpha$ for $\alpha\in X$ and $B_\beta\in \A_\beta$ for $\beta\in Y$ there is no
projection $P\in\B(\ell_2)$ satisfying
$$PA_\alpha =^\K A_\alpha \ \hbox{for all}\ \alpha\in X\ \  \hbox{and}\ \ 
 PB_\beta =^\K 0\  \hbox{for all}\ \beta\in Y.
\leqno (\perp)$$
\end{theorem}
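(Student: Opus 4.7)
The plan is to realize $(\A_\alpha)_{\alpha<\omega_1}$ as a Luzin blockwise system of almost matrix units in $\B(\ell_2)$, built from a Luzin almost disjoint family with the enhanced combinatorial properties supplied by Theorem \ref{luzin-existence}, and then to extract a Luzin-type separator from any hypothetical witnessing projection $P$, contradicting the Luzin property of the underlying family.

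For the construction, I fix a block decomposition $\ell_2=\bigoplus_n H_n$ with each $H_n$ infinite-dimensional. To each $\alpha<\omega_1$ the Luzin family assigns an infinite set $L_\alpha\subseteq\N$, and one builds matrix units $\{e^\alpha_{ij}\}_{i,j\in\N}$ in $\B(\ell_2)$ whose ``support'' is concentrated on the blocks indexed by $L_\alpha$; the units satisfy the exact matrix unit relations, so $\A_\alpha:=C^*(\{e^\alpha_{ij}\})\cong\K(\ell_2)$, but crucially each $e^\alpha_{ij}$ is noncompact as an operator on $\ell_2$. Pairwise almost orthogonality follows from almost disjointness of the $L_\alpha$'s: only finitely many blocks are shared between any two levels, and on each shared block the blockwise Luzin structure is arranged so that the overlap of the two matrix unit systems is finite-dimensional, making any cross-product compact.

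For the main property I argue by contradiction. Assume a projection $P$ together with choices $A_\alpha\in\A_\alpha$, $B_\beta\in\A_\beta$ witness the failure of $(\perp)$. Since compact $A_\alpha$ (or $B_\beta$) contribute no constraint, after thinning $X$ and $Y$ I may assume each chosen element is noncompact in $\B(\ell_2)$. From $P$ and the block decomposition I define a candidate separator $C\subseteq\N$, for instance the set of indices $n$ where the compression $P|_{H_n}$ exceeds a fixed spectral threshold (e.g.\ operator norm $>1/2$). The condition $(1-P)A_\alpha\in\K(\ell_2)$ should then force almost all of $L_\alpha$ into $C$ for $\alpha$ in an uncountable subset of $X$, and symmetrically $PB_\beta\in\K(\ell_2)$ should force almost all of $L_\beta$ to lie outside $C$ for $\beta$ in an uncountable subset of $Y$. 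This produces a Luzin separator for the two resulting uncountable subfamilies, contradicting Luzin-ness.

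The main obstacle will be carrying out the previous paragraph uniformly and robustly: $P$ need not be block-diagonal, and each $A_\alpha$, $B_\beta$ is an arbitrary and potentially complicated linear combination of matrix units rather than a distinguished element. I expect to address this by (i) a uniformization step -- pigeonhole or $\Delta$-system on countable tagging data attached to each $A_\alpha$, $B_\beta$ -- reducing to the case where the chosen operators are essentially canonical (e.g.\ close to a specific $e^\alpha_{00}$ after cutting down by an approximate unit); and (ii) invoking the strengthened Luzin properties from Theorem \ref{complicated-ad}, which appear designed precisely to rule out not only coordinate separators but also the subtler projection-type separators arising from non-diagonal $P$.
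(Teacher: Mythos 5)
Your choice of objects is the right one -- the algebras $\A_\alpha=C^*(\mathcal S_\alpha)$ for a Luzin blockwise system of almost matrix units from Proposition \ref{luzin-existence} -- but the derivation of the contradiction has a genuine gap exactly at the point you flag as ``the main obstacle''. Your plan is to turn the hypothetical projection $P$ into a set $C\subseteq\N$ (a diagonal separator) and then contradict the \emph{combinatorial} Luzin property of the underlying almost disjoint family. This reduction works only in the special case where the chosen $A_\alpha$, $B_\beta$ are (close to) diagonal projections among the matrix units: then $PA_\alpha=^\K A_\alpha$ does force $\langle Pe_n,e_n\rangle\to 1$ along the support of $A_\alpha$. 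For a general noncompact element of $C^*(\mathcal S_\alpha)$ -- an arbitrary linear combination of off-diagonal matrix units -- there is no canonical subset of $\N$ attached to it, and $PA_\alpha=^\K A_\alpha$ does not localize $P$ on any prescribed set of coordinates. Moreover, even in the diagonal case the sets you would feed into the separation argument are infinite \emph{subsets} (rows) of the $X_\alpha$'s rather than the $X_\alpha$'s themselves, and Luzin-ness does not pass to infinite subsets; one has to use the much stronger grid property (5) of Theorem \ref{complicated-ad}, which you invoke only by saying you ``expect'' it to suffice. The proposed fixes (pigeonhole/$\Delta$-system to make the elements ``essentially canonical'') are not a proof: there is no reason an arbitrary element of $\K(\ell_2)\cong\A_\alpha$ should be reducible to a corner of $e^\alpha_{00}$ without losing the noncompactness that drives the argument.

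The paper avoids this entire reduction. The separation-resistance is packaged once and for all as an \emph{operator-norm} Luzin property (Definition \ref{blockwise-luzin-def}): for any uncountable $X,Y$, any $A_\alpha\in C^*(\mathcal S_\alpha)$, $B_\beta\in C^*(\mathcal S_\beta)$, any compacts $W_1,W_2$ and any $\varepsilon>0$ there are $\alpha\in X$, $\beta\in Y$ with $\|(A_\alpha-W_1)(B_\beta-W_2)\|\geq\|A_\alpha\|\|B_\beta\|-\varepsilon$; this is proved in Proposition \ref{luzin-existence} by approximating by finite linear combinations, using the grid property (5) of Theorem \ref{complicated-ad} to find a common $k_2\times k_2$ Kronecker block, and applying Lemma \ref{kronecker}. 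Given that, the theorem follows from a short algebraic trick you are missing: from $PA_\alpha=^\K A_\alpha$ and $PB_\beta=^\K 0$ one gets compacts $V_\alpha,U_\beta$ with $A_\alpha^*P=A_\alpha^*+V_\alpha$ and $(1-P)B_\beta=B_\beta+U_\beta$, whence $(A_\alpha^*+V_\alpha)(B_\beta+U_\beta)=A_\alpha^*P(1-P)B_\beta=0$ \emph{exactly}; thinning $X$ and $Y$ by separability of $\K(\ell_2)$ so that the $V_\alpha$'s and $U_\beta$'s cluster near fixed compacts $-W_1,-W_2$ then contradicts the norm-Luzin property. No analysis of $P$ itself, and no extraction of a subset of $\N$, is needed. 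To complete your proof you would either have to carry out the diagonal reduction rigorously for arbitrary elements (which appears to fail) or reorganize it along these lines.
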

\begin{proof}
We claim that  the algebras $\A_\alpha=C^*(\mathcal S_\alpha)$ for a Luzin blockwise system of almost matrix units
defined in Definitions \ref{blockwise-def} and \ref{blockwise-luzin-def}, which exists by
Proposition \ref{luzin-existence}, satisfy the theorem. Let $X, Y$ be as in the theorem 
and suppose that there is  a  projection $P\in\B(\ell_2)$ as in ($\perp$). Let
$V_\alpha, U_\beta\in \K(\ell_2)$  for $\alpha\in X$ and $\beta\in Y$ be such that
$A_\alpha^* P-A_\alpha^*=V_\alpha$ for $\alpha\in X$ and 
$(1-P)B_\beta -B_\beta =U_\beta$ for $\beta\in Y$. So $(A_\alpha^*+V_\alpha)(B_\beta+U_\beta)=0$
for $\alpha\in X, \beta\in Y$.
Using the separability of $\C$ and of $\K(\ell_2)$
by thining out $X$ and $Y$ to uncountable subsets we may assume that there are $W_1, W_2\in \K(\ell_2)$
such that $\|(A_\alpha^*-W_1)(B_\beta-W_2)\|< \|A_\alpha^*\|\|B_\beta\|/2$ for all
$\alpha\in X$ and $\beta\in Y$. But this contradicts the Luzin property from Definition\ref{blockwise-luzin-def}.
\end{proof}

We should note that almost disjoint families
and in particular  Luzin families  recently found several applications in constructions
of interesting noncommutative objects (\cite{wofsey, farah-wofsey, ad, vaccaro}). Ours seems to be the first application where
one considers collections of subalgebras rather than collections of elements of a $C^*$-algebra.

Today the diversity of thin-tall algebras in the commutative case is much better understood than at the moment of
publication of \cite{simon-weese}. In \cite{roitman-auto} Roitman showed that 
  it is consistent that there are
$2^{\omega_1}$ (as many as possible) pairwise non-isomorphic thin-tall  superatomic Boolean algebras.
 In \cite{dow-simon}
Dow and Simon distinguished  in ZFC $2^{\omega_1}$
 nonisomorphic thin-tall superatomic Boolean algebras by analyzing the groups
 of automorphisms of such algebras. It would be interesting
 also to study these groups in the fully noncommutative case.

Another aspect of our construction is related to 
the structure of the family of all two-sided ideals of $\A$. Lemma 6.2 of \cite{cb} implies that
all two-sided ideals of $\A$ are among the ideals $\I_\alpha$ for $\alpha\leq \omega_1$. In particular,
they form a continuous chain where all elements are stable for $\alpha<\omega_1$ and 
$\I_{\omega_1}=\A$ is not stable, so we obtain:

\begin{theorem} There exists a $C^*$-algebra, where the family of all stable ideals
has   no maximal 
element. In particular, this
family does not have the greatest element.
\end{theorem}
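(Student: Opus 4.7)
The plan is to take the algebra $\A$ from Theorem \ref{main0} (together with its elementary essential composition series $(\I_\alpha)_{\alpha\leq\omega_1}$) as the witness, and argue that (i) every proper ideal of $\A$ is some $\I_\alpha$ with $\alpha<\omega_1$, (ii) all of these $\I_\alpha$ are stable, and (iii) $\I_\alpha\subsetneq \I_{\alpha+1}$ for every $\alpha<\omega_1$, so no $\I_\alpha$ can be maximal in the family of stable ideals, while $\A=\I_{\omega_1}$ itself fails to be stable.

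For (i), I would invoke Lemma 6.2 of \cite{cb}, which (as is recalled just before the theorem) states that for a fully noncommutative scattered $C^*$-algebra every two-sided ideal is already a term of the Cantor--Bendixson composition series. Combined with Lemma \ref{series-scattered}, this identifies the two-sided ideals of $\A$ exactly with the $\I_\alpha$ for $\alpha\leq\omega_1$.

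For (ii), I would argue by transfinite induction on $\alpha<\omega_1$ that each $\I_\alpha$ is a separable stable AF-algebra, using the results (1)--(4) listed in the introduction. The base step $\I_1\cong \K(\ell_2)$ is trivially stable. At successor stages one uses that $\I_{\alpha+1}/\I_\alpha\cong\K(\ell_2)$, so $\I_{\alpha+1}$ is an extension of a separable stable AF-algebra by a separable stable AF-algebra, hence stable by Blackadar's result (item (3)). At countable limit stages, $\I_\lambda=\overline{\bigcup_{\alpha<\lambda}\I_\alpha}$ is a countable inductive limit of separable stable algebras, hence stable by the Hjelmborg--R\o rdam theorem (item (4)). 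This makes every $\I_\alpha$ with $\alpha<\omega_1$ a stable ideal.

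For (iii), the quotient $\I_{\alpha+1}/\I_\alpha\cong \K(\ell_2)$ is nonzero, so $\I_\alpha\subsetneq \I_{\alpha+1}$. Hence no $\I_\alpha$ with $\alpha<\omega_1$ is maximal among the stable ideals, since $\I_{\alpha+1}$ is a strictly larger stable ideal. The only ideal left is $\I_{\omega_1}=\A$, which by Theorem \ref{main0} is not stable. Therefore the family of stable ideals is exactly the strictly increasing chain $(\I_\alpha)_{\alpha<\omega_1}$, which has neither a maximal nor a greatest element. There is no real obstacle here; the whole argument is a bookkeeping assembly of Theorem \ref{main0}, Lemma \ref{series-scattered}, and the classical permanence results (1)--(4) — all the work has already been done.
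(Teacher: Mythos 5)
Your proposal is correct and follows essentially the same route as the paper: the paper derives this theorem from the preceding discussion, which invokes Lemma 6.2 of \cite{cb} to identify all two-sided ideals of $\A$ with the terms $\I_\alpha$ of the composition series, the permanence results (1)--(4) to conclude that each $\I_\alpha$ with $\alpha<\omega_1$ is a separable stable AF-algebra, and Theorem \ref{main0} for the nonstability of $\A=\I_{\omega_1}$. Your explicit transfinite induction for stability and the observation that $\I_\alpha\subsetneq\I_{\alpha+1}$ just spell out what the paper leaves implicit.
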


This gives a negative answer  to  Question 6.5 of \cite{rordam-stable} (only) in the nonseparable case,
which asks if every $C^*$-algebra has the greatest stable ideal.
In fact, in every separable $C^*$-algebra  there are  maximal elements in the family
of stable ideals
(see the discussion after 6.5 of \cite{rordam-stable}). Clearly, the additional feature of this
answer is the simplicity of the composition series of the algebra.

The purpose of Section 2 is to introduce the appropriate terminology and to prove the existence
of a Luzin blockwise system of almost matrix units (Proposition \ref{luzin-existence}).
 In  Section 3 we show how to connect a Luzin blockwise system of almost matrix units
 with an elementary essential composition series (Theorem \ref{pairing}).
 The main relation between these two objects is called domination (Definition
\ref{def-domination}). Section 4 is devoted to the proof of the nonstablity
of the algebra constructed in Section 3.

The terminology should be standard, e.g., like in  \cite{Blackadar, davidson, murphy}. We list here
some possible exceptions. $\tilde \A$ stands for the unitization of a $C^*$-algebra $\A$.
$\K(\ell_2)$ denotes the algebra of compact operators on  the separable Hilbert space $\ell_2$
and $\B(\ell_2)$ the algebra of all bounded operators on $\ell_2$.
For $A, B\in \wp(\N)$  we use the notation $A\subseteq^{Fin}B$ to mean that $B\setminus A$ is finite,
similarly $A=^{Fin} B$ if $A\subseteq^{Fin} B$ and $B\subseteq^{Fin} A$.
For $A, B\in \B(\ell_2)$ we use the notation $A=^\K B$ to mean
$A-B\in \K(\ell_2)$. $\cong$ stands for  the isomorphism relation  of $C^*$-algebras by which we always mean
the $*$-isomorphism relation.
Two projections $P, Q\in \B(\ell_2)$ are {almost orthogonal}
if and only if $PQ=^\K0$ (cf. \cite{wofsey}). $\delta_{x, y}$ stands for the Kronecker delta.
A {system of matrix units} in a $C^*$-algebra  is a family of  its nonzero elements
$\mathcal T=\{T_{j, i}: i, j\in \N\}$, 
such that 
for every $m, n, i, j \in \N$ we have
\begin{itemize}
 \item $T_{j, i}^* =   T_{i, j}$ and 
 \item $T_{n, m}\ T_{j, i} =\delta_{m, j}T_{n, i}$.
 \end{itemize}

For a set of operators $\mathcal S \subseteq \B(\ell_2)$ let $ C^*(\mathcal S)$ denote the $C^*$-subalgebra of 
$\mathcal B(\ell_2)$ generated by
the operators in $\mathcal S$.

\section{Blockwise systems of almost  matrix units}

Let $(\lambda_\alpha)_{\alpha<\omega_1}$ be 
the strictly increasing sequence of all countable limit ordinals (including $0$).
We introduce the following  notations:
\begin{itemize}
\item $\Lambda_\alpha=[\lambda_\alpha, \lambda_{\alpha}+\omega)\times [\lambda_\alpha, \lambda_\alpha+\omega)$
 for each $\alpha<\omega_1$,
\item $\Lambda=\bigcup_{\alpha<\omega_1}\Lambda_\alpha$.
\end{itemize}

\begin{definition}\label{blockwise-def} Suppose that $\mathcal S=(S_{\eta, \xi}: (\xi, \eta)\in \Lambda)$
is a system of noncompact operators in $\B(\ell_2)$.  We say that $\mathcal S$ is a {blockwise system of almost
matrix units} whenever the following are satisfied:
\begin{enumerate}
\item $\mathcal S_{\alpha}=\{S_{\eta, \xi}: (\xi, \eta)\in \Lambda_\alpha\}$ is 
a system of matrix units in $\B(\ell_2)$ for every $\alpha<\omega_1$,
\item $\{S_{\xi, \xi}: \xi\in \omega_1\}$ is a family of pairwise almost orthogonal projections.
\end{enumerate}
We say that $\mathcal S$ is {separated} by 
a  sequence of projections $( P_\alpha: \alpha<\omega_1)\subseteq \B(\ell_2)$
whenever the following hold:
\begin{enumerate}
\item[(3)] $P_{\alpha'} P_\alpha=^\K P_{\alpha'}$ 
for all $\alpha'\leq \alpha<\omega_1$,
\item[(4)] $P_{\alpha}S_{\xi, \eta} P_{\alpha}
=^\K S_{\xi, \eta}$ for each $(\xi, \eta)\in \Lambda_{\alpha'}$ 
for each $\alpha'<\alpha<\omega_1$,
\item[(5)] $P_\alpha^\perp S_{\xi, \eta} P_\alpha^\perp
= S_{\xi, \eta}$ for each $(\xi, \eta)\in \Lambda_\alpha$ 
for each $\alpha<\omega_1$.
\end{enumerate}
\end{definition}

\begin{definition}\label{blockwise-luzin-def} 
Suppose that $\mathcal S=(S_{\eta, \xi}: (\xi, \eta)\in \Lambda)$ is a blockwise system
of almost matrix units. We say that  $\mathcal S$ is  {Luzin} if given
\begin{enumerate}
\item  two uncountable  subsets $X, Y$ of $\omega_1$,
\item $A_\alpha\in  C^*(\mathcal S_{\alpha})$
for each $\alpha\in X$,
\item $B_\alpha\in  C^*(\mathcal S_{\alpha})$
for each $\alpha\in Y$,
\item  $\varepsilon>0$,
\item $W_1, W_2\in \K(\ell_2)$,

\end{enumerate}
there are $\alpha \in X$ and $\beta\in Y$ such that
$$\|(A_{\alpha}-W_{1})(B_{\beta}-W_{2})\|\geq\|A_{\alpha}\| \|B_{\beta}\|-\varepsilon.$$
\end{definition}

We will use the almost disjoint family as in Theorem \ref{complicated-ad} to show that 
Luzin blockwise systems of almost matrix units exist and they can be separated by families of projections. 
First we need the following lemma.

\begin{lemma}\label{kronecker} Suppose that $n\in \N$ and that $\mathcal T=\{T_{m,k}: k, m\leq n\}\subseteq \B(\ell_2)$
and $\mathcal S=\{S_{j, i}: i, j\leq n\}\subseteq \B(\ell_2)$ are two finite systems of matrix units.
Suppose that there are pairwise orthogonal norm one vectors $(e^k_i: i,k\leq n)$ such that

\begin{enumerate}
\item  $T_{m, k}(e^{k'}_i)=\delta_{k, k'}e^m_i$ for all $i, k, k', m\leq n$,
\item  $S_{j, i}(e^k_{i'})=\delta_{i, i'}e^k_j$ for all $i, i', j, k\leq n$.
\end{enumerate}
Let $A\in C^*(\mathcal T)$ and
$B\in C^*(\mathcal S)$.
Then $\|AB\|=\|A\|\|B\|$. Moreover this fact is witnessed by a norm one
vector from $span(e^k_i: i, k \leq n)$.
\end{lemma}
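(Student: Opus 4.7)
The plan is to exploit the hidden tensor product structure: each $T_{m,k}$ acts only on the ``upper'' index $k$ of the vectors $e^k_i$, while each $S_{j,i}$ acts only on the ``lower'' index $i$, so on the finite-dimensional subspace $V := \text{span}(e^k_i : i,k\leq n)$ the two families commute and together realize the tensor product of two copies of a matrix algebra.

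Setting $N := |\{k \in \N : k \leq n\}|$ and identifying $V$ with $\C^N \otimes \C^N$ via $e^k_i \leftrightarrow e_k \otimes e_i$, hypotheses (1) and (2) translate into $T_{m,k}|_V = E_{m,k} \otimes \mathrm{Id}$ and $S_{j,i}|_V = \mathrm{Id} \otimes E_{j,i}$, where the $E_{\cdot,\cdot}$ are the standard matrix units of $M_N(\C)$. In particular $V$ is invariant under each $T_{m,k}$ together with its adjoint $T_{m,k}^* = T_{k,m}$, and similarly under each $S_{j,i}$ and $S_{j,i}^* = S_{i,j}$, so restriction to $V$ induces $*$-homomorphisms $\rho_\mathcal{T} : C^*(\mathcal{T}) \to \B(V)$ and $\rho_\mathcal{S} : C^*(\mathcal{S}) \to \B(V)$.

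Since $\mathcal{T}$ is a system of matrix units with nonzero entries, $C^*(\mathcal{T})$ is $*$-isomorphic to $M_N(\C)$ and hence simple, so $\rho_\mathcal{T}$ is injective and therefore isometric; likewise for $\rho_\mathcal{S}$. Writing $\rho_\mathcal{T}(A) = A' \otimes \mathrm{Id}$ and $\rho_\mathcal{S}(B) = \mathrm{Id} \otimes B'$ gives $\|A'\| = \|A\|$ and $\|B'\| = \|B\|$. Choosing unit vectors $x, y \in \C^N$ with $\|A'x\| = \|A'\|$ and $\|B'y\| = \|B'\|$, the unit vector $v := x \otimes y \in V$ satisfies $\|ABv\| = \|A'x \otimes B'y\| = \|A'\|\|B'\| = \|A\|\|B\|$. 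Combined with the trivial $\|AB\| \leq \|A\|\|B\|$, this yields the equality $\|AB\| = \|A\|\|B\|$ together with the ``moreover'' clause.

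The only step that is not a pure formality is checking that the restrictions $\rho_\mathcal{T}, \rho_\mathcal{S}$ preserve adjoints, i.e.\ that $V$ is invariant under the generators along with their adjoints; but this is immediate from (1) and (2) because the adjoint of any matrix unit in $\mathcal{T}$ or $\mathcal{S}$ lies in the same family. The remaining ingredients — tensor factorization of the restrictions, isometry via simplicity, and multiplicativity of the operator norm on elementary tensors in a finite-dimensional Hilbert space — are standard.
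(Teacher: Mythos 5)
Your proof is correct. It is worth noting that it is precisely the argument the paper's proof opens by declining to give: the authors write ``This follows from elementary properties of tensor products, but we present a shorter complete proof producing the required vector,'' and then proceed by hand. Concretely, the paper uses simplicity of $C^*(\mathcal T)$ only to get an isometric restriction to each ``row'' $\mathcal H_i=\mathrm{span}\{e^k_i: k\leq n\}$, picks norming vectors $x$ and $y$ for $A$ and $B$ there, and then verifies $\|AB(z)\|=\|A\|\|B\|$ for $z=\sum_{j,k}x_ky_je^k_j$ by an explicit computation with the matrix-unit relations (including a direct check that $\|\sum\alpha_k\beta_je^k_j\|=\|\alpha\|\|\beta\|$). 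You instead restrict to all of $V=\mathrm{span}(e^k_i:i,k\leq n)$ at once, identify the restrictions as $A'\otimes\mathrm{Id}$ and $\mathrm{Id}\otimes B'$, and invoke isometry of injective $*$-homomorphisms plus multiplicativity of norms on elementary tensors. The witnessing vector is literally the same ($z=x\otimes y$), so the mathematical content coincides; your version buys conceptual transparency at the cost of citing tensor-product facts, while the paper's version is self-contained at the level of explicit sums. You correctly flagged and disposed of the one genuinely checkable point, namely that $V$ is invariant under the generators \emph{and} their adjoints so that restriction is a $*$-homomorphism (and it is nonzero, hence injective by simplicity). No gaps.
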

\begin{proof} 
This follows from elementary properties of tensor products, but
we present a shorter complete proof producing the required vector.
As $\mathcal T=\{T_{m,k}: k, m\leq n\}$ is a system of matrix units, the algebra $C^*(\mathcal T)$ 
is isomorphic to the algebra $n\times n$ matrices and therefore is simple.
Hence,  for each $i\leq n$ the restriction of elements of $C^*(\mathcal T)$ to their invariant
subspace $\mathcal H_i=span\{e^k_i: k\leq n\}$, as a nonzero homomorphism,  is an isomorphism  of $C^*(\mathcal T)$
into $\B(\mathcal H_i)$.
It follows that there is  $x=(x_1,\dots, x_n)\in\ell_2^n$
such that $v_i=\sum_{k\leq n} x_ke^k_i$ is of norm one and
$A(v_i)=v_i'=\sum_{k\leq n} x_k'e^k_i$ and
$\|v_i'\|=\|A\|$ for each $i\leq n$. Likewise, there is $y=(y_1, \dots, y_n)\in\ell_2^n$
such that $w_k=\sum_{j\leq n} y_je^k_j$ is of norm one 
and
$B(w_k)=w_k'=\sum_{k\leq n} y_j'e^k_j$ and  $\|w_k'\|=\|B\|$ for each $k\leq n$.

By a direct calculation we note that vectors of $\ell_2$ of the form
$\sum_{j, k\leq n}\alpha_k\beta_je^k_j$ have norms equal to 
the product $\|(\alpha_1,\dots\alpha_n)\|_{\ell_2^{n}}\|(\beta_1,\dots\beta_n)\|_{\ell_2^{n}}$.
Consider norm one element
$z=
\sum_{j, k\leq n}x_ky_je^k_j$.
We have
$$BA(z)=B(\sum_{j\leq n} y_j A(\sum_{k\leq n}x_ke^k_j))=
B(\sum_{j\leq n} y_j (\sum_{k\leq n}x_k'e^k_j))=$$
$$=\sum_{k\leq n}x_k'B(\sum_{j\leq n} y_je^k_j)
=\sum_{k\leq n}x_k'(\sum_{j\leq n} y_j'e^k_j)=\sum_{j, k\leq n}x_k'y_j'e^k_j.$$
So $\|BA(z)\|=\|A(v_i)\|\|B(w_k)\|=\|A\|\|B\|$ for any $i, k\leq n$. Similarly $\|AB(z)\|=\|A\|\|B\|$, 
which completes the proof.

\end{proof}

\begin{theorem}[\cite{hgk}]\label{complicated-ad} There are families $(X_\alpha: \alpha<\omega_1)$, 
$(Y_\alpha: \alpha<\omega_1)$ of infinite subsets of $\N$ and
 bijections $x^\alpha:\N\times\N\rightarrow X_\alpha$
 for each $\alpha<\omega_1$
such that 
\begin{enumerate}
\item $X_\beta\cap X_\alpha=^{Fin}\emptyset$ for all $\beta<\alpha<\omega_1$,
\item $Y_\beta\subseteq^{Fin} Y_\alpha$ for all $\beta<\alpha<\omega_1$,
\item $X_\beta\subseteq^{Fin} Y_\alpha$ for all $\beta<\alpha<\omega_1$,
\item $X_\alpha\cap Y_\alpha=\emptyset$ for all $\alpha<\omega_1$,
\item For every $\alpha<\omega_1$ and every $k\in \N$ for all
but finitely many $\beta<\alpha$ there are $m_1< ...<m_k$ and $n_1< ...< n_k$ 
and $l_{i, j}\in \N$ such that 
$$x^\alpha({i,n_j})=l_{i, j}=x^\beta({j, m_i})$$
for all $1\leq i, j\leq k$.
\end{enumerate}
\end{theorem}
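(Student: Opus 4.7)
I would construct $(X_\alpha, Y_\alpha, x^\alpha)$ by transfinite recursion on $\alpha < \omega_1$, maintaining (1)--(5) as invariants on the initial segment. Successor stages are handled in much the same way as limits, so I focus on a generic stage $\alpha$ whose predecessors are enumerated as $\{\beta_n : n \in \N\}$.

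Fix any strictly increasing sequence $K_n \to \infty$, e.g.\ $K_n = n+1$. The strategy for condition (5) is to plant, for each $n$, a transposed $K_n \times K_n$ block from the grid of $x^{\beta_n}$ into a disjoint patch of the grid of $x^\alpha$. Concretely, choose increasing columns $m^{(n)}_1 < \cdots < m^{(n)}_{K_n}$ in the second coordinate of $x^{\beta_n}$, pushed far enough out that the images $B_n := \{x^{\beta_n}(l, m^{(n)}_i) : 1 \leq l, i \leq K_n\} \subseteq X_{\beta_n}$ are pairwise disjoint as subsets of $\N$ (this uses condition (1) at earlier stages). Put $Y_\alpha := \bigl(\bigcup_n (Y_{\beta_n} \cup X_{\beta_n})\bigr) \setminus \bigcup_n B_n$. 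Next, partition the second coordinate $\N$ into disjoint infinite pieces $\{C_n\}_{n \in \N}$ and list each $C_n = \{n^{(n)}_l : l \in \N\}$ in increasing order; declare $x^\alpha(i, n^{(n)}_l) := x^{\beta_n}(l, m^{(n)}_i)$ for $1 \leq i, l \leq K_n$, and complete the definition of $x^\alpha$ on all remaining grid positions by fresh naturals in $\N \setminus Y_\alpha$ distinct from all values already assigned.

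Condition (4) is immediate. For (5): with $\beta = \beta_n$ and any $k \leq K_n$, the first $k$ rows and first $k$ columns of the planted block witness the required matching via $n_j := n^{(n)}_j$ and $m_i := m^{(n)}_i$ for $i, j \leq k$, so only the finitely many $n$ with $K_n < k$ can fail to supply a $k$-block. Conditions (1)--(3) reduce to combinatorial bookkeeping involving the choices of $Y_{\beta_n}$ and $B_n$.

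The main obstacle, and the genuine combinatorial content of the theorem, is precisely this bookkeeping and its global coherence across $\omega_1$-many stages. Two tensions must be managed at once. First, when $\beta_m < \beta_n$ we have already enforced $X_{\beta_m} \subseteq^{Fin} Y_{\beta_n}$, so a future block $B_m \subseteq X_{\beta_m}$ may lie almost entirely inside $Y_{\beta_n}$; subtracting $\bigcup_m B_m$ from $Y_\alpha$ then threatens the invariant $Y_{\beta_n} \subseteq^{Fin} Y_\alpha$. At every stage $\beta_n$ one must therefore design $Y_{\beta_n}$ so that for every earlier $\beta_m$ the finite complement $X_{\beta_m} \setminus Y_{\beta_n}$ remains large enough to absorb every block $B_m$ any later stage might ever demand. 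Second, enough ``free'' naturals outside $\bigcup_{\beta < \alpha}(Y_\beta \cup X_\beta)$ must persist at every $\alpha$ to supply fresh values for $x^\alpha$, which forces the $Y_\beta$'s to be sparsely and coherently coordinated. Arranging such a room-leaving budget across all of $\omega_1$ while preserving (1)--(5) is precisely where the actual work of \cite{hgk} lies.
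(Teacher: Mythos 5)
First, a point of reference: the paper does not prove Theorem \ref{complicated-ad} at all --- it is imported wholesale from \cite{hgk} (``In preparation''), so there is no internal proof to compare your argument against. Your sketch has the right general shape for clause (5): planting, for the $n$-th predecessor $\beta_n$, a transposed $K_n\times K_n$ block of $x^{\beta_n}$ into a fresh patch of columns of $x^\alpha$ does produce the required matchings with $n_j, m_i$ increasing and with the first $k$ rows of both grids (which is what Proposition \ref{luzin-existence} actually uses), and clauses (1) and (4) plus the bijectivity of $x^\alpha$ can indeed be arranged by the standard almost-disjoint-family bookkeeping you allude to.

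The genuine gap is clause (2), and your own diagnosis of it is correct but your proposed remedy does not work. Since $B_m\subseteq X_{\beta_m}\subseteq X_\alpha$ and (4) forces $X_\alpha\cap Y_\alpha=\emptyset$, every planted block must be removed from $Y_\alpha$; and since (3) at earlier stages forces $X_{\beta_m}\setminus Y_{\beta_n}$ to be \emph{finite} whenever $\beta_m<\beta_n$, each block $B_m$ with $\beta_m<\beta_n$ lies almost entirely inside $Y_{\beta_n}$ unless it fits inside that fixed finite set. As $\beta_n$ typically has infinitely many predecessors among the $\beta_m$, the union $\bigcup_m (B_m\cap Y_{\beta_n})$ is then infinite and $Y_{\beta_n}\subseteq^{Fin}Y_\alpha$ fails. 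Your suggested fix --- ``design $Y_{\beta_n}$ so that $X_{\beta_m}\setminus Y_{\beta_n}$ is large enough to absorb every block any later stage might ever demand'' --- cannot be carried out as stated: that set must remain finite by (3), it is frozen at stage $\beta_n$, and the blocks demanded by the uncountably many later stages have unbounded size and column positions not yet determined. Resolving this requires a global coordination of the sets $X_\beta\setminus Y_\gamma$ across all pairs $\beta<\gamma$ (this is exactly the Luzin-type separation phenomenon the theorem is named for), and that coordination is the actual mathematical content of \cite{hgk}, which your proposal explicitly defers rather than supplies. A similar, milder issue affects your ``fresh values'' reservoir at limit stages, where a genuine (not mod-finite) intersection of infinitely many infinite sets is being used. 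So the proposal is an honest outline, not a proof.
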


\begin{proposition}\label{luzin-existence} There is a Luzin blockwise system of almost matrix units which
is separated by a family of projections.
\end{proposition}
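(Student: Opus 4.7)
Using Theorem~\ref{complicated-ad}, the plan is to realize each block $\mathcal{S}_\alpha$ on the subspace $\overline{\mathrm{span}}\{e_n: n \in X_\alpha\}$ of $\ell_2$. Concretely, I define $S_{\lambda_\alpha+j,\,\lambda_\alpha+i}$ to be the partial isometry sending $e_{x^\alpha(i,k)} \mapsto e_{x^\alpha(j,k)}$ for every $k \in \N$ and vanishing on $\overline{\mathrm{span}}\{e_n: n \notin X_\alpha\}$, and I take $P_\alpha$ to be the orthogonal projection onto $\overline{\mathrm{span}}\{e_n: n \in Y_\alpha\}$. Verifying Definition~\ref{blockwise-def} is then routine: within each $\Lambda_\alpha$ the matrix-unit relations follow from the bijectivity of $x^\alpha$, and each $S_{\lambda_\alpha+i,\lambda_\alpha+i}$ is the noncompact projection onto $\overline{\mathrm{span}}\{e_{x^\alpha(i,k)}: k \in \N\}$; the pairwise almost orthogonality of $\{S_{\xi,\xi}\}$ across distinct blocks reduces to $|X_\alpha \cap X_{\alpha'}| < \infty$. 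The separation conditions (3), (4), (5) of Definition~\ref{blockwise-def} follow directly from $Y_\beta \subseteq^{Fin} Y_\alpha$, $X_\beta \subseteq^{Fin} Y_\alpha$, and $X_\alpha \cap Y_\alpha = \emptyset$ respectively, with the ``$=^{\K}$'' approximations arising from the finite sets $Y_\beta \setminus Y_\alpha$ and $X_\beta \setminus Y_\alpha$.

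For the Luzin property, fix $X, Y, A_\alpha, B_\beta, \varepsilon, W_1, W_2$ as in Definition~\ref{blockwise-luzin-def}. Since $C^*(\mathcal{S}_\alpha) \cong \K(\ell_2)$, I approximate each $A_\alpha$ within $\varepsilon/8$ by its principal truncation $\sum_{i, j \leq N_\alpha} a^\alpha_{j,i} S_{\lambda_\alpha+j, \lambda_\alpha+i}$, and likewise for $B_\beta$. Standard pigeonhole arguments applied to the truncation dimension, the finite coefficient matrices (which live in a bounded ball of $\mathbb{C}^{N^2}$), and the operator norms produce uncountable $X' \subseteq X$ and $Y' \subseteq Y$, a uniform $N$, fixed matrices $a, b \in \mathbb{C}^{N \times N}$, and constants $a_0, b_0$ such that for $\alpha \in X'$ the difference $\|A_\alpha - A^\alpha\|$ is negligible and $\|A_\alpha\| \approx \|A^\alpha\| = a_0$, where $A^\alpha := \sum_{i, j \leq N} a_{j,i} S_{\lambda_\alpha+j, \lambda_\alpha+i}$, with analogous statements for $B^\beta$ on $Y'$. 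Using compactness of $W_1, W_2$ I also fix $M_0$ so that $\|W_t e_n\|, \|W_t^* e_n\| < \delta$ for every $n \geq M_0$ and $t = 1,2$, with $\delta$ chosen small relative to $\varepsilon/(N(1 + a_0 + b_0 + \|W_1\| + \|W_2\|))$.

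The main step now invokes property~(5) of Theorem~\ref{complicated-ad}. Pick $\alpha \in X'$ with $Y' \cap [0, \alpha)$ infinite and apply (5) with $k$ sufficiently large compared with $N$ and $M_0$ to locate $\beta \in Y' \cap [0, \alpha)$ admitting a $k \times k$ common block $\{l_{i,j}\}$ with $e_{l_{i,j}} = e^{\alpha, n_j}_i = e^{\beta, m_i}_j$. Under these identifications, the sub-block $\overline{\mathrm{span}}\{e_{l_{i,j}}: i, j \leq N\}$ is isomorphic to $\mathbb{C}^N \otimes \mathbb{C}^N$ in such a way that $A^\alpha$ acts as $a \otimes I$ and $B^\beta$ acts as $I \otimes b$; Lemma~\ref{kronecker} then supplies a unit vector $z = \sum_{i, j \leq N} u_i v_j e_{l_{i,j}}$ with $\|A^\alpha B^\beta z\| = a_0 b_0$. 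Provided the common block can be chosen so that every $l_{i,j}$ in this $N \times N$ sub-block satisfies $l_{i,j} \geq M_0$, both $z$ and $B^\beta z$ are supported on basis vectors of index at least $M_0$, so $\|W_2 z\|$, $\|W_1 B^\beta z\|$, and $\|W_1 W_2 z\|$ are each bounded by a small multiple of $\delta$; combining with the pigeonhole approximation errors produces $\|(A_\alpha - W_1)(B_\beta - W_2) z\| \geq a_0 b_0 - \varepsilon$, as required. The main obstacle I expect is precisely this last provision --- deploying the freedom in property~(5) of Theorem~\ref{complicated-ad} (possibly after enlarging $k$ and passing to a sub-block in which the ``bad'' indices below $M_0$ are avoided) to place the common $N \times N$ block simultaneously past the essential supports of $W_1$ and $W_2$, while preserving the tensor-product structure required by Lemma~\ref{kronecker}.
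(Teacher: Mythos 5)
Your proposal is correct and follows essentially the same route as the paper: the same realization of $\mathcal S_\alpha$ and $P_\alpha$ from the families of Theorem~\ref{complicated-ad}, the same pigeonhole reduction to uniformly truncated finite matrix combinations and finitely supported $W_1, W_2$, and the same use of property~(5) plus Lemma~\ref{kronecker} on a common block. The one obstacle you flag at the end is resolved in the paper exactly as you anticipate: one applies~(5) with $k=k_1+k_2$ (where $k_1$ bounds the supports of $W_1,W_2$ and $k_2$ the truncation size) and uses the injectivity of $x^\alpha$ --- so that at most $k_1$ entries of the $k\times k$ grid can lie in $\{1,\dots,k_1\}$ --- to extract a $k_2\times k_2$ sub-block on which $W_1,W_2$ vanish while the hypotheses of Lemma~\ref{kronecker} persist.
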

\begin{proof}
Let  
$(X_\alpha: \alpha<\omega_1)$ be the almost disjoint family with
enumerations $(x^\alpha_{n, m}: \alpha<\omega_1, n, m\in \N)$ 
which is separated by a family  $(Y_\alpha: \alpha<\omega_1)$ as in Theorem \ref{complicated-ad},
where $x^\alpha_{n, m}=x^\alpha(n, m)$ for all $\alpha<\omega_1, n, m\in \N$. 
Fix an orthogonal  basis $(e_n: n\in \N)$ of $\ell_2$ and define the following
operators diagonal with respect to this basis:
\begin{itemize}
\item $P_\alpha(e_n)=\chi_{Y_\alpha}(n)e_n$,
\item $S_{\lambda_\alpha+k, \lambda_\alpha+k}(e_n)=\chi_{\{x^\alpha_{k, i}: i\in \N\}}(n)e_n$,
\end{itemize}
for all $\alpha<\omega_1$, where $\chi_X$ denotes the characteristic function
of $X\subseteq \N$. Moreover for every 
$\alpha<\omega_1$ and every $m,k \in \N$ define the partial isometry $S_{\lambda_\alpha+k, \lambda_\alpha+m}$ by
\begin{itemize}
\item $S_{\lambda_\alpha+k, \lambda_\alpha+m}(e_{x^\alpha_{m, i}})=e_{x^\alpha_{k, i}}$ for every $i\in \N$,
\item $S_{\lambda_\alpha+k, \lambda_\alpha+m}(e_{n})=0$ if $n$ is not of the form $x^\alpha_{m, i}$
for some $i\in \N$.
\end{itemize}
It is immediate from Theorem \ref{complicated-ad} (1) - (4) that
$\mathcal S=\{S_{\eta, \xi}: (\xi, \eta)\in \Lambda\}$ is a blockwise system of almost matrix units
which is separated by $(P_\alpha: \alpha<\omega_1)$. We will use Theorem \ref{complicated-ad} (5)
to conclude that it is  Luzin. 

So fix
  two uncountable  subsets $X, Y$ of $\omega_1$ and operators
 $A_\alpha\in  C^*(\mathcal S_{\alpha})$
for each $\alpha\in X$, 
 $B_\alpha\in  C^*(\mathcal S_{\alpha})$
for each $\alpha\in Y$,
 $\varepsilon>0$ and two compact operators
 $W_1, W_2\in \K(\ell_2)$. We may assume that $\varepsilon<1$.

By approximating $W_1$ and $W_2$ we may assume that
there is $k_1\in \N$ such that $\langle W_1(e_n), e_{n'}\rangle=0=\langle W_2(e_n), e_{n'}\rangle$
whenever $n, n' \geq k_1$.
By passing to smaller uncountable subsets of $X$ and $Y$ respectively
we may assume that there is $M>1$  such that $\|A_\alpha\|<M$ for
all $\alpha\in X$ and $\|B_\alpha\|<M$ for all $\alpha\in Y$. Passing further
to uncountable subsets of $X$ and $Y$ we may assume that there is $k_2\in \N$ such that for every $\alpha \in X$ and $\beta\in Y$ 
there are 
 $k_2\times k_2$ matrices $(a_{m,n})_{m,n\leq k_2}$,
 and  $(b_{m,n})_{m,n\leq k_2}$ such that
$$\|A_\alpha'-A_\alpha\|<\delta, \qquad \|B_\beta'-B_\beta\|<\delta,$$ 
for some fixed $\delta=\delta(\varepsilon, M)>0$, where
$$A_\alpha'=\sum_{n, m<k_2}a_{m,n}S_{\lambda_\alpha+m, \lambda_\alpha+n}$$
for  each  $\alpha\in X$ and 
$$B_\beta'=\sum_{n, m<k_2}b_{m,n}S_{\lambda_\beta+m, \lambda_\beta+n}$$
for  each $\beta\in Y$.
Let $\alpha\in X$ be such that $Y\cap\alpha=Y\cap\{\beta:\beta<\alpha\}$ is infinite. 
By Theorem \ref{complicated-ad} (5)
there is a finite $F\subseteq\alpha$ such that whenever $\beta\in (Y\cap\alpha)\setminus F$, then
there are $m_1< ...<m_{k_1+k_2}$ and $n_1< ...< n_{k_1+k_2}$ 
and $l_{i, j}\in \N$ such that 
$$x^\alpha_{i,n_j}=l_{i, j}=x^\beta_{j, m_i}$$
for all $1\leq i, j\leq k_1+k_2$. 
Let $G\subseteq\{1, ..., k_1+k_2\}$  be of size $k_2$ such that $x^\alpha_{i, n_j}\not\in \{1, ..., k_1\}$
for $i\in G$ and any $1\leq j\leq k_1+k_2$.
Now note that $A_\alpha'$ and $B_\beta'$ satisfy the hypothesis
of Lemma \ref{kronecker} on the finite dimensional spaces
spanned by $\{e_{x^\alpha_{i, n_j}}: i, j\in G\}$. By the choice of $G$
the operators $W_1, W_2$ are null on this subspace. So Lemma
\ref{kronecker} implies that
$\|(A_\alpha'-W_1)(B_\beta'-W_2)\|=\|A_\alpha'\|\|B_\beta'\|$. 
This means that if $\delta$ is sufficiently small, then
$$\|(A_{\alpha}-W_{1})(B_{\beta}-W_{2})\|\geq\|A_{\alpha}\| \|B_{\beta}\|-\varepsilon.$$

\end{proof}

\section{Dominating a blockwise system of almost matrix units by a representing sequence}

The following are two main definitions of this section:

\begin{definition}\label{def-representing}
 Let $\A$ be a $C^*$-algebra with
 an elementary essential composition series $(\I_\alpha)_{\alpha\leq\beta}$
 for some ordinal $\beta$, and let $\pi_\alpha: \A\rightarrow \A/\I_{\alpha}$ for $\alpha<\beta$
be the quotient homomorphisms.
 A system  $\mathcal T= (T_{\alpha+1, m, n}: n, m\in \N, \alpha<\beta)$ of elements
 of $\A$
is called a {representing sequence} for $\A$
if for each $\alpha<\beta$ the following hold:
\begin{enumerate}
\item $(T_{\alpha+1, m, n}: n, m\in \N)$ is a 
system of matrix units in $\A$.
\item $( \pi_\alpha(T_{\alpha+1, m, n}): n, m\in \N)$ is a system 
of matrix units in $\A/\I_{\alpha}$ which generates
 $\I_{\alpha+1}/\I_\alpha$.

\end{enumerate}
We also say that $(T_{\alpha+1, m, n}: m, n\in \N)$ represents the $(\alpha+1)$-th
level of $\A$.
\end{definition}

\begin{definition}\label{def-domination}
Suppose that $\mathcal S=\{S_{\eta, \xi}: (\xi, \eta)\in \Lambda\}$ is a blockwise system of almost 
matrix units which is separated by a family of 
projections $\mathcal P= (P_\alpha: \alpha<\omega_1)$. Let
$\mathcal A\subseteq \B(\ell_2)$ be a   $C^*$-algebra with an elementary essential
composition series with
a representing sequence $\mathcal T=(T_{\alpha+1, m, n}: n, m\in \N, \alpha<\omega_1)$.
We say that $\mathcal T$ {dominates} $\mathcal S$ if for every $0<\alpha<\omega_1$, $m,n\in\N$, we have
$$T_{\alpha+1, m, n}= P_\alpha T_{\alpha+1, m, n} P_\alpha+S_{\lambda_\alpha+m, \lambda_\alpha+n}. \leqno (*)$$

\end{definition}

The reason that the case $\alpha= 0$ is excluded in the definition above, is that in Theorem \ref{pairing} we would like 
$(T_{1, m, n}: n, m\in \N)$ to be the standard system of matrix units for $\K(\ell_2)$, and 
$S_{\lambda_0+n,\lambda_0+m}$ the way constructed in Proposition \ref{luzin-existence}
are noncompact operators, hence do not satisfy the condition of Definition \ref{def-domination} for $\alpha=0$.

We will show in Theorem \ref{pairing} that given any blockwise system of almost matrix units $\mathcal S$
we can find a $C^*$-algebra $\A$ with an essential elementary 
composition series with a representing sequence $\mathcal T$ which
  dominates $\mathcal S$. 
For this we need some lemmas. 

\begin{definition}Let $\A$ be a $C^*$-subalgebra of $\B(\ell_2)$ with
 an elementary essential composition series $(\I_\alpha)_{\alpha\leq\beta}$
 for some ordinal $\beta$, and let $\pi_\alpha: \A\rightarrow \A/\I_{\alpha}$ for $\alpha<\beta$
be the quotient homomorphisms. For each element $A\in \A$ we define its height $ht(A)$ by
$$ht(A)=\min\{\alpha\leq\beta: A\in \I_\alpha\}.$$
\end{definition} 

\begin{lemma}\label{lemma-domination} Suppose that $\mathcal S, \mathcal P, \A$ and $\mathcal T$ are  as 
in Definition \ref{def-domination} and $0\leq\alpha<\omega_1$. 
If $\mathcal T$ dominates $\mathcal S$, then for every $A \in \A$ of height $\leq \alpha+1$  we have
$A=^\K P_\alpha A P_\alpha +C$, where  $C\in  C^*(\mathcal S_{\alpha})$.
If $\alpha>0$ and $ht(A)=\alpha+1$, then $C\not=0$.
\end{lemma}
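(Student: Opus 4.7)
I would prove the lemma by transfinite induction on $\alpha$, simultaneously with the auxiliary statement
\[
(\mathrm{Aux}\text{-}\alpha) \quad P_\alpha E P_\alpha =^\K E \text{ for every } E\in \I_\alpha.
\]
The base case $\alpha=0$ is immediate: $(\mathrm{Aux}\text{-}0)$ is vacuous, and because the convention underlying Theorem~\ref{pairing} makes $(T_{1,m,n})$ the standard matrix units of $\K(\ell_2)$, the ideal $\I_1$ is contained in $\K(\ell_2)$, so any $A\in\I_1$ is compact and $C=0$ works.

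\textbf{Establishing $(\mathrm{Aux}\text{-}\alpha)$.} At a successor $\alpha=\beta+1$, the main claim at $\beta$ (by induction) gives $E=^\K P_\beta E P_\beta + C'$ with $C'\in C^*(\mathcal S_\beta)$; then
\[
P_{\beta+1}E P_{\beta+1}=^\K P_{\beta+1}P_\beta E P_\beta P_{\beta+1} + P_{\beta+1}C'P_{\beta+1},
\]
and Conditions (3) and (4) of Definition~\ref{blockwise-def}, together with norm-closedness of $\K(\ell_2)$ (which lets one pass from matrix-unit approximations of $C'$ to $C'$ itself), yield that the two summands are $=^\K P_\beta E P_\beta$ and $=^\K C'$ respectively, so the sum is $=^\K E$. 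At a limit $\alpha$, write $E=\lim_n E_n$ with $E_n\in\I_{\beta_n}$, $\beta_n<\alpha$; from $(\mathrm{Aux}\text{-}\beta_n)$ and $P_{\beta_n}P_\alpha=^\K P_{\beta_n}$ (Condition~(3)) one gets $P_\alpha E_n P_\alpha=^\K E_n$ for each $n$, so $P_\alpha E P_\alpha - E$ is a norm limit of compacts and hence compact.

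\textbf{Main claim at $\alpha\ge 1$.} Let $\mathcal B_{\alpha+1}$ denote the $C^*$-subalgebra of $\A$ generated by $\{T_{\alpha+1,m,n}:m,n\in\N\}$. The restriction $\pi_\alpha\!\upharpoonright\!\mathcal B_{\alpha+1}$ is a $*$-homomorphism out of the simple algebra $\mathcal B_{\alpha+1}\cong\K(\ell_2)$ whose image contains the matrix units generating $\I_{\alpha+1}/\I_\alpha$, so it is a $*$-isomorphism onto $\I_{\alpha+1}/\I_\alpha$. Lift $\pi_\alpha(A)$ to the unique $T\in\mathcal B_{\alpha+1}$. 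Then $A-T\in\I_\alpha$, and $(\mathrm{Aux}\text{-}\alpha)$, just proved, gives
\[
A =^\K P_\alpha A P_\alpha + (T-P_\alpha T P_\alpha).
\]
That $C:=T-P_\alpha TP_\alpha$ lies in $C^*(\mathcal S_\alpha)$ follows by observing that the bounded linear map $X\mapsto X - P_\alpha XP_\alpha$ on $\mathcal B_{\alpha+1}$ and the $*$-isomorphism $\psi:\mathcal B_{\alpha+1}\to C^*(\mathcal S_\alpha)$ determined by $T_{\alpha+1,m,n}\mapsto S_{\lambda_\alpha+m,\lambda_\alpha+n}$ agree on every matrix unit by the domination identity, hence on their norm-dense linear span, hence on all of $\mathcal B_{\alpha+1}$; thus $C=\psi(T)\in C^*(\mathcal S_\alpha)$.

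\textbf{Nonvanishing of $C$ when $\alpha>0$ and $ht(A)=\alpha+1$.} Since every matrix unit in $\mathcal S_\alpha$ is noncompact and $C^*(\mathcal S_\alpha)\cong\K(\ell_2)$ is simple, $C^*(\mathcal S_\alpha)\cap\K(\ell_2)=\{0\}$; hence $C$ is uniquely determined by $A$. If $C=0$ then $\psi(T)=0$, so $T=0$, so $\pi_\alpha(A)=0$ and $A\in\I_\alpha$, contradicting $ht(A)=\alpha+1$. The main technical subtlety I anticipate is the bounded-linear coincidence step that upgrades the domination identity from individual matrix units to the whole of $\mathcal B_{\alpha+1}$ (the map $X\mapsto X - P_\alpha X P_\alpha$ is not a $*$-homomorphism); this works precisely because finite-rank elements are norm-dense in $\K(\ell_2)$.
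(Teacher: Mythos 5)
Your argument is correct, but it is organized quite differently from the paper's. The paper proves the first assertion in one stroke: it observes that the operators $A$ admitting a decomposition $A=^\K P_\alpha AP_\alpha+C$ with $C\in C^*(\mathcal S_\alpha)$ form a subalgebra of $\B(\ell_2)$ (this rests on Definition \ref{blockwise-def}(5), which makes the $C$-parts multiply among themselves and annihilate the $P_\alpha\cdot P_\alpha$-parts), and then checks that all the generators $T_{\alpha'+1,m,n}$, $\alpha'\le\alpha$, of $\I_{\alpha+1}$ lie in this subalgebra --- with $C=0$ when $\alpha'<\alpha$, by (3), (4) and $(*)$, and with $C=S_{\lambda_\alpha+m,\lambda_\alpha+n}$ when $\alpha'=\alpha$, by $(*)$ alone. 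You instead run a transfinite induction whose auxiliary clause $(\mathrm{Aux}\text{-}\alpha)$ is exactly the $C=0$ case for elements of $\I_\alpha$, and you handle the top level by lifting $\pi_\alpha(A)$ through the isomorphism $\mathcal B_{\alpha+1}\cong\I_{\alpha+1}/\I_\alpha$ and comparing two bounded linear maps on the dense span of the matrix units. Both routes use the same ingredients (conditions (3)--(5), the identity $(*)$, simplicity of matrix-unit algebras, and $C^*(\mathcal S_\alpha)\cap\K(\ell_2)=\{0\}$). The paper's route is shorter but leaves implicit the verification that its subalgebra is norm-closed (which is where $C^*(\mathcal S_\alpha)\cap\K(\ell_2)=\{0\}$ is quietly needed to control the $C$-components along a Cauchy sequence); your route pays for the extra induction bookkeeping with an explicit formula $C=\psi(T)$ for the (unique) $C$, which makes the nonvanishing assertion immediate --- the paper's second part is essentially your argument, phrased as injectivity of $B\mapsto P_\alpha^\perp BP_\alpha^\perp$ on $C^*(\mathcal T_\alpha)$. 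One shared caveat: at $\alpha=0$ both you and the paper implicitly use that $\I_1\subseteq\K(\ell_2)$ (the standing convention from Theorem \ref{pairing}), since $(*)$ is not assumed there; you at least state this explicitly.
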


\begin{proof}
 The  elements $A\in \B(\ell_2)$ of the form $A=^\K P_\alpha AP_\alpha+C$ where
$\alpha<\omega_1$ and $C\in C^*(\mathcal S_\alpha)$ form a subalgebra of $\B(\ell_2)$
because $P_\alpha^\perp CP_\alpha^\perp=C$ for each $C\in C^*(\mathcal S_\alpha)$ by
Definition \ref{blockwise-def} (5).

 By Definition \ref{blockwise-def} items (3), (4) we have that 
  $P_{\alpha'} P_\alpha=^\K P_{\alpha'}$ and moreover
 $P_{\alpha}S_{\lambda_{\alpha'}+m, \lambda_{\alpha'}+n} P_{\alpha}
=^\K S_{\lambda_{\alpha'}+m, \lambda_{\alpha'}+n}$ 
for each $\alpha'<\alpha<\omega_1$ and each $n, m\in \N$.
So by ($*$)  we have  $P_\alpha T_{\alpha'+1, m, n} P_\alpha=^\K T_{\alpha'+1, m, n}$ for each $\alpha'<\alpha$
  and each $n, m\in \N$. By  ($*$)
$T_{\alpha+1, m, n}= P_\alpha T_{\alpha+1, m, n} P_\alpha
+C$ for $C\in C^*(\mathcal S_{\alpha})$. 
The first part of the lemma follows from the fact that $T_{\alpha'+1, m, n}$s for $\alpha'\leq\alpha$
  and each $n, m\in \N$ generate $\I_{\alpha+1}$.

  By Definition \ref{def-representing} (2) $ht(A)=\alpha+1$ implies that there is 
  $B\in  C^*(\mathcal T_\alpha)\setminus \{0\}$
  such that $A-B\in \I_\alpha$, where $\mathcal T_\alpha=\{T_{\alpha+1, m, n}: n, m\in \N\}$. 
  As noted in the first part of the proof $P_\alpha D P_\alpha=^\K D$ for each $D\in \I_\alpha$.
  So for the second part of the lemma it is enought to prove that $P_\alpha^\perp B P_\alpha^\perp\not\in \K(\ell_2)$.
  
  By ($*$)
  and Definition \ref{blockwise-def} (5) the
  range of $P_\alpha^\perp$ is invariant for $ C^*(\mathcal T_\alpha)$, and restricting elements of 
  $ C^*(\mathcal T_\alpha)$ to
  it is an isomorhpism. By ($*$) and the hypothesis that $\alpha>0$
  the range of this isomorphism is $ C^*(\mathcal S_\alpha)$ which
  satisfies $ C^*(\mathcal S_\alpha)\cap \K(\ell_2)=\{0\}$. It follows that $P_\alpha^\perp BP_\alpha^\perp\not
  \in \K(\ell_2)$ as required.
\end{proof}

\begin{lemma}\label{auto-unitary}
Suppose  $\A, \A'\subseteq \B(\ell_2)$ are 
  $C^*$-subalgebras of $\B(\ell_2)$ which both contain $\K(\ell_2)$. Then
  every isomorhism $\Phi:\A\rightarrow \A'$ is given by $\Phi(A)=U^*AU$
  for all $A\in \A$ for some fixed unitary $U\in \B(\ell_2)$.
  \end{lemma}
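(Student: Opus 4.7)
The plan is to first identify $\K(\ell_2)$ as an intrinsically distinguished ideal inside both $\A$ and $\A'$, then invoke the classical spatial implementation of automorphisms of $\K(\ell_2)$, and finally extend that implementation to the whole algebra by a short ideal calculation.

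First, I would show that $\K(\ell_2)$ is the unique minimal nonzero closed two-sided ideal in $\A$ (and analogously in $\A'$). The key point is the essentiality of $\K(\ell_2)$ in $\A$: for any nonzero $A\in\A$, pick $\xi\in\ell_2$ with $A\xi\neq 0$; then $A\cdot|\xi\rangle\langle\xi|\in\K(\ell_2)$ is a nonzero compact element of the ideal of $\A$ generated by $A$. So $\K(\ell_2)$ meets every nonzero ideal of $\A$, and by simplicity of $\K(\ell_2)$ the intersection must be all of $\K(\ell_2)$. Hence $\K(\ell_2)$ is contained in every nonzero ideal of $\A$, which is a purely algebraic property preserved by $\Phi$; in particular $\Phi(\K(\ell_2))=\K(\ell_2)$, so $\Phi_0:=\Phi|_{\K(\ell_2)}$ is a $*$-automorphism of $\K(\ell_2)$.

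Next, I invoke the classical fact that every $*$-automorphism of $\K(\ell_2)$ is implemented by conjugation by a unitary on $\ell_2$. Concretely, if $(e_{ij})$ is a system of matrix units for $\K(\ell_2)$ corresponding to an orthonormal basis $(\xi_i)$ of $\ell_2$, pick a unit vector $\eta_1$ in the range of $\Phi_0(e_{11})$ and set $\eta_i:=\Phi_0(e_{i1})\eta_1$; then $(\eta_i)$ is an orthonormal basis of $\ell_2$, and the unitary $U$ defined by $U\eta_i:=\xi_i$ satisfies $\Phi_0(K)=U^*KU$ for all $K\in\K(\ell_2)$.

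Finally, I extend this to all of $\A$: for $A\in\A$ and $K\in\K(\ell_2)\subseteq\A$ we have $AK\in\K(\ell_2)$, so
$$\Phi(A)\Phi(K)=\Phi(AK)=U^*AKU=(U^*AU)(U^*KU)=(U^*AU)\Phi(K),$$
and hence $(\Phi(A)-U^*AU)T=0$ for every $T\in\K(\ell_2)$ (as $\Phi(K)$ ranges over all of $\K(\ell_2)$). Testing this on the rank-one projection onto an arbitrary vector $\xi$ yields $(\Phi(A)-U^*AU)\xi=0$, so $\Phi(A)=U^*AU$. The only genuinely delicate point I anticipate is the intrinsic identification of $\K(\ell_2)$ in the first step — one must ensure that $\Phi$ sends the compacts sitting in $\A$ to the compacts sitting in $\A'$, rather than to some abstractly isomorphic but spatially different copy inside $\A'$ — and this is precisely what the essentiality-plus-simplicity argument above guarantees; the remaining steps are then routine.
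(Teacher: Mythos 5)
Your proof is correct and follows essentially the same route as the paper's: identify $\K(\ell_2)$ intrinsically so that $\Phi$ must preserve it (the paper uses minimal projections where you use the characterization as the minimum nonzero closed two-sided ideal), implement the restriction to $\K(\ell_2)$ by a unitary, and extend to all of $\A$ via the nondegenerate action of the compacts. There are no gaps.
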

  \begin{proof}
  Note that
minimal projections of $\A$ and of $\A'$ are one dimensional, 
and $\Phi$ preserves minimal projections, which means that $\Phi[\K(\ell_2)] = \K(\ell_2)$.
Every automorphism of $\K(\ell_2)$ is induced by conjugating with a unitary in $\B(\ell_2)$  (Theorem 2.4.8 of \cite{murphy}).
 So for some unitary $U$ we have $\Phi(T) = U^* T U$, for every $T$ in $\K(\ell_2)$. Fix an orthogonal basis $\{e_n: n\in \N\}$ for $\ell_2$
and let $P_n$ for
$n\in\N$ denotes the projection on the one dimensional subspace spanned by $e_n$. Let $Q_n = U P_n U^*$ for every $n\in \N$. 
For every   $T\in \A$ and $m,n$ we have 
\begin{align*}
(\Phi(T) e_n , e_m) &= (P_m \Phi(T)P_n e_n, e_m) = (\Phi(Q_m T Q_n)e_n , e_m)\\
&= (U^* Q_m T Q_n U e_n , e_m) = (P_m U^* T U P_n e_n , e_m)\\
&=(U^*TU e_n, e_m).
\end{align*}
Therefore $\Phi (T) = U^* T U$, for every $T\in \A$.
\end{proof} 


\begin{lemma}\label{stable-extension0} Suppose that $\beta$ is an ordinal and 
$(\I_\alpha)_{\alpha\leq\beta}$ is an elementary essential composition series for 
 a stable   $C^*$-algebra $\A$.
Then there is a  stable   $C^*$-algebra
$\B$ with an essential elementary composition series
$(\J_\alpha)_{\alpha\leq\beta+1}$ satisfying $\J_\alpha=\I_\alpha$ for $\alpha\leq\beta$
and $\J_{\beta+1}=\B$ with a sequence $\{T_{\beta+1, m, n}: n, m\in \N\}$
representing the $\beta+1$-th level of $\B$.
\end{lemma}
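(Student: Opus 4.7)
\smallskip

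\noindent\textbf{Proof plan.} The plan is to take the simplest possible extension: set $\B = \A \oplus \K(\ell_2)$ (external direct sum of $C^*$-algebras), identify $\A$ with $\A \oplus \{0\}$ so that $\A$ becomes a closed two-sided ideal of $\B$, and let $\J_\alpha = \I_\alpha$ for $\alpha \leq \beta$ and $\J_{\beta+1} = \B$. The quotient $\B/\A$ is then canonically isomorphic to $\K(\ell_2)$, so for the representing matrix units at the $(\beta+1)$-th level I would take $T_{\beta+1,m,n} = (0, E_{m,n})$, where $(E_{m,n})_{m,n\in\N}$ is the standard system of matrix units of $\K(\ell_2)$.

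First I would verify that $(\J_\alpha)_{\alpha \leq \beta+1}$ is an elementary essential composition series. Conditions (a), (b) and the conditions (c), (d) for $\alpha < \beta$ are inherited directly from the hypothesis on $(\I_\alpha)_{\alpha\leq\beta}$, since no limit ordinal lies strictly between $\beta$ and $\beta+1$. For the single new step $\alpha = \beta$: the quotient $\J_{\beta+1}/\J_\beta = \B/\A$ is isomorphic to $\K(\ell_2)$, giving (d), and it is trivially essential in itself, giving (c). The matrix units $T_{\beta+1,m,n}$ lie in the summand $\K(\ell_2)$ of $\B$ and their images under $\pi_\beta$ are precisely the standard matrix units of $\B/\A \cong \K(\ell_2)$, which generate $\J_{\beta+1}/\J_\beta$; this is exactly what Definition~\ref{def-representing}(1)--(2) demands at the $(\beta+1)$-th level.

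The only remaining point is stability of $\B$, which I would establish by the computation
\[
\B \otimes \K(\ell_2) \;=\; \bigl(\A \otimes \K(\ell_2)\bigr) \oplus \bigl(\K(\ell_2) \otimes \K(\ell_2)\bigr) \;\cong\; \A \oplus \K(\ell_2) \;=\; \B,
\]
using the stability of $\A$ in the first summand and the standard isomorphism $\K(\ell_2) \otimes \K(\ell_2) \cong \K(\ell_2)$ in the second. There is no real obstacle here: every step is a direct verification against the definitions, and the only content is the observation that adding a free copy of $\K(\ell_2)$ as a top-level summand preserves both stability and the composition-series structure. If an ambient representation on $\ell_2$ is wanted for later use, one can identify $\B$ with its image in $\B(\ell_2 \oplus \ell_2) \cong \B(\ell_2)$.
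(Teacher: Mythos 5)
Your construction $\B=\A\oplus\K(\ell_2)$ does not work: it breaks condition (c) of Definition~\ref{elem-ess} at every level $\alpha<\beta$. Indeed, for $\alpha<\beta$ you have $\B/\J_\alpha\cong(\A/\I_\alpha)\oplus\K(\ell_2)$, and the ideal $\J_{\alpha+1}/\J_\alpha=(\I_{\alpha+1}/\I_\alpha)\oplus\{0\}$ has zero intersection with the nonzero ideal $\{0\}\oplus\K(\ell_2)$, so it is no longer essential in $\B/\J_\alpha$ even though it was essential in $\A/\I_\alpha$. Your claim that (c) for $\alpha<\beta$ is ``inherited directly'' overlooks that essentiality is a condition on the quotient of the \emph{ambient} algebra, which you have changed by adjoining an orthogonal direct summand. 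This is not a cosmetic defect: the essentiality of each layer is exactly what makes the composition series coincide with the Cantor--Bendixson/GCR series (Lemma~\ref{series-scattered}) and forces every ideal of the final algebra to appear in the chain, which the rest of the paper depends on. (The other parts of your verification --- stability of the direct sum, condition (d), and the trivial essentiality at the top level $\alpha=\beta$ --- are fine; the failure is only, but fatally, at the lower levels.)

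The correct construction must place the new copy of $\K(\ell_2)$ ``on top of'' $\A$ rather than beside it. The paper takes $\B=\tilde\A\otimes\K(\ell_2)$ and identifies $\A$ with the essential ideal $\A\otimes\K(\ell_2)$ of $\B$ --- an identification that is legitimate precisely because $\A$ is stable, so stability is used here to build the extension, not merely to certify stability of $\B$ afterwards. The representing matrix units for the new level are $T_{\beta+1,m,n}=1\otimes e_{m,n}$, whose products with elements of $\A\otimes\K(\ell_2)$ are nonzero throughout, so every $\J_{\alpha+1}/\J_\alpha$ remains essential in $\B/\J_\alpha$. If you want to salvage your write-up, replace the direct sum by this unitization-tensor construction (or any other genuinely essential extension of $\A$ by $\K(\ell_2)$); the direct sum cannot be repaired.
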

\begin{proof}
By Lemma \ref{series-scattered} $\A$ is scattered $C^*$-algebra with the Cantor-Bendixson 
composition series $(\I_\alpha)_{\alpha\leq\beta}$. So by Lemma 7.5 of \cite{cb}
and again Lemma \ref{series-scattered} the algebra
$\B=\tilde \A\otimes\K(\ell_2)$, where $\tilde\A$ is the unitization of $\A$, is the required algebra 
with the identification of $\A$ and $\A\otimes \K(\ell_2)$ which is justified
by the stability of $\A$. 
$T_{\beta+1, m, n}=1\otimes e_{m,n}$, where $e_{m, n}$s are the standard
matrix units in $\K(\ell_2)$.
\end{proof}

\begin{lemma}\label{stable-extension} Suppose that $\beta<\omega_1$ and 
$(\I_\alpha)_{\alpha\leq\beta}$ is an elementary essential composition series for 
 a stable   $C^*$-subalgebra $\A$ of $\B(\ell_2)$ such that $\I_1=\K(\ell_2)$.
Then there is a  stable   $C^*$-subalgebra
$\B$ of  $\B(\ell_2)$ with an elementary essential composition series
$(\J_\alpha)_{\alpha\leq\beta+1}$ satisfying $\J_\alpha=\I_\alpha$ for $\alpha\leq\beta$
and $\J_{\beta+1}=\B$ with a sequence $\{T_{\beta+1, m, n}: n, m\in \N\}$
representing the $\beta+1$-th level of $\B$
\end{lemma}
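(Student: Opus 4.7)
The plan is to combine Lemma \ref{stable-extension0}, which provides the abstract stable extension $\B_0 = \tilde\A \otimes \K(\ell_2)$ together with its required elementary essential composition series of length $\beta+1$, with a unitary-implementation argument based on Lemma \ref{auto-unitary} to realize $\B_0$ as a $C^*$-subalgebra of $\B(\ell_2)$ that contains $\A$ sitting in its original position.

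Concretely, I would fix a unitary $V: \ell_2 \otimes \ell_2 \to \ell_2$ and let $\pi: \B_0 \to \B(\ell_2)$ be the faithful representation obtained from the natural representation of $\tilde\A \otimes \K(\ell_2)$ on $\ell_2 \otimes \ell_2$ conjugated by $V$. Both $\A$ (by the hypothesis $\I_1 = \K(\ell_2)$) and $\pi(\A \otimes \K(\ell_2))$ (since $\pi(\K(\ell_2) \otimes \K(\ell_2)) = V\K(\ell_2 \otimes \ell_2)V^* = \K(\ell_2)$) are $C^*$-subalgebras of $\B(\ell_2)$ containing $\K(\ell_2)$. The stability of $\A$ provides an abstract $*$-isomorphism $\A \cong \A \otimes \K(\ell_2)$ which, composed with $\pi$, gives an isomorphism $\psi: \A \to \pi(\A \otimes \K(\ell_2))$ between two subalgebras of $\B(\ell_2)$ each containing $\K(\ell_2)$. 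Applying Lemma \ref{auto-unitary} I would obtain a unitary $U \in \B(\ell_2)$ with $\psi(A) = U^*AU$ for all $A \in \A$, so that $U\pi(\A \otimes \K(\ell_2))U^* = \A$. I then set
\[
\B = U\pi(\tilde\A \otimes \K(\ell_2))U^* \subseteq \B(\ell_2), \qquad T_{\beta+1, m, n} = U\pi(1 \otimes e_{m,n})U^*,
\]
where $(e_{m,n})_{m,n \in \N}$ are the standard matrix units of $\K(\ell_2)$. Stability of $\B$, essentiality of $\J_{\beta+1}/\J_\beta \cong \K(\ell_2)$ in $\B/\J_\beta$, and the matrix-unit identities for the sequence $(T_{\beta+1, m, n})$ are all inherited from $\B_0$ through the $*$-isomorphism $X \mapsto U\pi(X)U^*$.

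The main obstacle I anticipate is verifying that $\J_\alpha = \I_\alpha$ for every $\alpha \leq \beta$, i.e., that the unitary $U$ extracted from Lemma \ref{auto-unitary} is actually compatible with the given filtration of $\A$ rather than mapping it onto some other sequence of ideals. This is resolved by the canonicity of the Cantor-Bendixson composition series: by Proposition 5.3 of \cite{cb} combined with Lemma \ref{series-scattered}, the $\alpha$-th ideal of $\A \otimes \K(\ell_2)$ equals $\I_\alpha \otimes \K(\ell_2)$, and any $*$-isomorphism between scattered $C^*$-algebras necessarily respects their Cantor-Bendixson ideals level by level. Consequently, the $*$-isomorphism $X \mapsto U\pi(X)U^*$ sends $\I_\alpha \otimes \K(\ell_2)$ to $\I_\alpha$ for each $\alpha \leq \beta$, giving $\J_\alpha = \I_\alpha$ as required and completing the construction.
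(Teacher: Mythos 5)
Your proof is correct and follows essentially the same route as the paper: obtain the abstract stable extension from Lemma \ref{stable-extension0}, represent it faithfully on $\ell_2$ so that its minimal ideal becomes $\K(\ell_2)$, and then use Lemma \ref{auto-unitary} to conjugate the resulting copy of $\A$ (equivalently, of $\A\otimes\K(\ell_2)$) back onto $\A$ itself. The only differences are cosmetic --- the paper realizes the embedding via the multiplier algebra of the essential ideal $\K(\ell_2)$ rather than via a unitary $\ell_2\otimes\ell_2\cong\ell_2$ --- and your explicit appeal to the canonicity of the Cantor--Bendixson composition series to verify $\J_\alpha=\I_\alpha$ for $\alpha\leq\beta$ makes precise a point the paper leaves implicit.
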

\begin{proof}
We need to find a $C^*$-subalgebra $\B$ of $\B(\ell_2)$ such that
$\A\subseteq \B$ and $\B$ satisfies Lemma \ref{stable-extension0}.
First consider a $\B'$ which satisfies Lemma \ref{stable-extension0}, not necessarily a subalgebra of
$\B(\ell_2)$.
The second step is to obtain  an algebra  $\B''$ isomorphic to $\B'$ and satisfying $\K(\ell_2)\subseteq \B''\subseteq \B(\ell_2)$.
To get it note that
 $\K(\ell_2)$ must be an essential ideal 
 of $\B'$ by Definition \ref{elem-ess}, so,  since $\B(\ell_2)$ is  the
multiplier algebra of $\K(\ell_2)$,
 there is an embedding $\Phi:  \B'\rightarrow \B(\ell_2)$  with image $\phi[\B']=\B''$ such that
$\Phi[\K(\ell_2)]=\K(\ell_2)$.  Let $\A''=\Phi[\A]$.
By Lemma \ref{auto-unitary} there is a  unitary $U\in \B(\ell_2)$ such that
the conjugation by $U$ is an isomorphism from $\A''$ onto $\A$. Since the conjugation by $U$
is an automorphism of the entire $\B(\ell_2)$ we conclude that
 $\B=\{U^*BU: B\in \B''\}$ works. 
\end{proof}

\begin{lemma}\label{corner-composition} Let $\beta$ be an ordinal. Suppose 
that a $C^*$-algebra $\A\subseteq \B(\ell_2)$ has
an elementary essential composition series $(\I_\alpha)_{\alpha\leq\beta}$,
with $\I_1=\K(\ell_2)$ and that there is an infinite rank projection
 $P\in \B(\ell_2)$  such that $A-PAP\in \K(\ell_2)$
for each $A\in \A$.  Then $P\A P$ is a $C^*$-subalgebra of $\B(\ell_2)$
 with
an elementary essential composition series $(P\I_\alpha P)_{\alpha\leq\beta}$,
 and
 $\A$ is generated by $\K(\ell_2)$ and $P\A P$. 
\end{lemma}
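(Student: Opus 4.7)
The plan is to exploit the identity $PAP = A - (A - PAP)$ together with the hypothesis $A - PAP \in \K(\ell_2) = \I_1 \subseteq \A$. This shows immediately that $P\A P \subseteq \A$, so $P\A P$ is a $*$-subalgebra of $\A$; norm-closedness follows because if $PA_nP$ converges to some $B \in \B(\ell_2)$, then $B = PBP$ lies in $\A$ since $\A$ is closed and each $PA_nP \in \A$. The same identity also yields $\A = \K(\ell_2) + P\A P$, establishing the generation claim.

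For the composition series, the key observation is that whenever $\alpha \geq 1$ one has $A \in \I_\alpha$ if and only if $PAP \in \I_\alpha$, because $A - PAP \in \K(\ell_2) \subseteq \I_\alpha$. In particular $P\I_\alpha P = \I_\alpha \cap P\A P$, so $P\I_\alpha P$ is an ideal of $P\A P$, and the inclusion $P\A P \hookrightarrow \A$ descends to an injective $*$-homomorphism
\[
\Phi_\alpha : P\A P / P\I_\alpha P \longrightarrow \A / \I_\alpha,
\]
which is surjective since every $A \in \A$ satisfies $A \equiv PAP \pmod{\I_\alpha}$. Under $\Phi_\alpha$, the subquotient $P\I_{\alpha+1} P / P\I_\alpha P$ corresponds to $\I_{\alpha+1}/\I_\alpha \cong \K(\ell_2)$, which is essential in $\A/\I_\alpha$ by hypothesis; this gives conditions (c) and (d) of Definition \ref{elem-ess} for $\alpha \geq 1$. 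For the base case $\alpha = 0$ the quotient is simply $P\K(\ell_2) P = \K(P\ell_2) \cong \K(\ell_2)$, using that $P$ has infinite rank, and it is visibly essential in the concrete subalgebra $P\A P$ of $\B(P\ell_2)$.

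The remaining clauses of Definition \ref{elem-ess} are routine: monotonicity of $(P\I_\alpha P)_\alpha$ is clear; $P\I_0 P = \{0\}$ and $P\I_\beta P = P\A P$; at a limit ordinal $\lambda$, $P\I_\lambda P$ contains each $P\I_\alpha P$ with $\alpha < \lambda$ and, by continuity of the compression $A \mapsto PAP$ together with $\I_\lambda = \overline{\bigcup_{\alpha<\lambda} \I_\alpha}$, equals the closure of the union. The main subtlety is that the identification $PAP \equiv A \pmod{\I_\alpha}$ only works once $\K(\ell_2) \subseteq \I_\alpha$, so the base case $\alpha = 0$ must be treated separately; but the direct corner computation $P\K(\ell_2) P = \K(P\ell_2)$ handles it.
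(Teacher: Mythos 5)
Your proposal is correct and follows essentially the same route as the paper: both arguments use $A-PAP\in\K(\ell_2)\subseteq\I_\alpha$ to show that compression by $P$ induces an isomorphism between the quotients of $\A$ and of $P\A P$ past the bottom level (the paper packages this as a single isomorphism $\A/\K(\ell_2)\cong P\A P/P\K(\ell_2)P$ and pulls the series back, you do it level by level), and both handle $\alpha=0$ separately via the essentiality of $P\K(\ell_2)P$ in $P\B(\ell_2)P$. No gaps.
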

\begin{proof}
Since  $A-PAP\in \K(\ell_2)$
for each $A\in \A$ and $\K(\ell_2) \subseteq \A$, we have that $P\A P \subseteq \A$.
$P\K(\ell_2)P$ is an essential ideal of $P\B(\ell_2)P$ and so an essential ideal of $P\A P$
which is isomorphic to $\K(\ell_2)$ since $P$ has infinite rank.

 Let $\pi_1: \A\rightarrow \A/\K(\ell_2)$ and $\sigma_1: P\A P\rightarrow P\A P/P\K(\ell_2)P$
 be the quotient maps. $\Psi:\A/\K(\ell_2)\rightarrow P\A P/P\K(\ell_2)P$  given by
 $\Psi(\pi_1(A))=\sigma_1(PA P)$ for $A\in\A$ is a well defined isomorphism  since e.g.,  $PABP-PAPBP\in \K(\ell_2)$
 for any $A, B\in\A$. It is clear that $\A/\I_1$ has an elementary
 essential composition series $(\I_\alpha/\I_1)_{\alpha\leq\beta}$ and so
 $P\A P/P\K(\ell_2)P$ has $(\Psi[\I_\alpha/\I_1])_{\alpha\leq\beta}$ as such a series.
 It follows that $P\A P$ has an essential composition series $(\J_\alpha)_{\alpha\leq\beta}$
 where $\J_1=P\K(\ell_2) P$ and 
 $$\J_\alpha=\sigma_1^{-1}[\Psi[\I_\alpha+\K(\ell_2)]]=\sigma^{-1}[P\I_\alpha P+P\K(\ell_2)P]=P\I_\alpha P$$
 for $1\leq\alpha\leq\beta$ as required.
The fact that  $\A$ is generated by $\K(\ell_2)$ and $P\A P$ follows directly from the assumptions.
\end{proof}

\begin{lemma}\label{extend-scattered} Let $\beta>0$ be an ordinal. Suppose that $P\in \B(\ell_2)$ is an infinite rank projection
and a $C^*$-algebra $\A$ satisfying $P\K(\ell_2)P\subseteq \A\subseteq P\B(\ell_2)P$ 
has  an elementary essential composition series $(\I_\alpha)_{\alpha\leq\beta+1}$ 
with $\I_1=P\K(\ell_2) P$ and with $\mathcal T=\{T_{\beta+1, n, m}: m, n\in \N\}$
representing the $(\beta+1)$-th level of $\A$. 
Let $\mathcal S=\{S_{n, m}: m, n\in \N\}\subseteq P^\perp\B(\ell_2)P^\perp$ be a system
of noncompact matrix units and define
$$R_{m,n}=T_{\beta+1, m,n}+S_{m,n}$$
for each $m, n\in \N$. 
 
Then the $C^*$-algebra $\B\subseteq \B(\ell_2)$ generated by $\I_\beta$, $\K(\ell_2)$ and
$\{R_{m,n}: n, m\in \N\}$  has an elementary essential composition series $(\J_\alpha)_{\alpha\leq\beta+1}$ 
such that $\J_1=\K(\ell_2)$, $\J_\alpha=\I_\alpha+\K(\ell_2)$ for $1\leq\alpha<\beta+1$ and
  $\mathcal R=\{R_{ m,n}: m, n\in \N\}$
represents the $(\beta+1)$-th level of $\B$.  
\end{lemma}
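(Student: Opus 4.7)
The plan is to describe $\B$ concretely as $\I_\beta+\K(\ell_2)+C^*(\mathcal R)$ and then verify each axiom of Definition \ref{elem-ess} for $(\J_\alpha)_{\alpha\leq \beta+1}$.

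The starting point is the corner decomposition: $T_{\beta+1, m, n}\in \A\subseteq P\B(\ell_2)P$ while $S_{m, n}\in P^\perp\B(\ell_2)P^\perp$, so all cross products between $T$'s and $S$'s vanish. This has two immediate consequences. First, $\mathcal R$ is a system of matrix units and the compressions $P(-)P$ and $P^\perp(-)P^\perp$ restrict to $*$-isomorphisms of $C^*(\mathcal R)$ onto $C^*(\{T_{\beta+1, m, n}\}_{m, n})$ and onto $C^*(\mathcal S)$, each isomorphic to $\K(\ell_2)$. Second, $\I_\beta\cdot R_{m, n}=\I_\beta\cdot T_{\beta+1, m, n}\subseteq \I_\beta$. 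A routine check on generators shows $\I_\beta+\K(\ell_2)+C^*(\mathcal R)$ is a $*$-subalgebra of $\B(\ell_2)$, and within its norm closure $\B$ the summand $\I_\beta+\K(\ell_2)$ is an ideal; so the standard lemma that a closed ideal plus a closed $*$-subalgebra is closed gives $\B=\I_\beta+\K(\ell_2)+C^*(\mathcal R)$.

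Set $\J_0=\{0\}$, $\J_\alpha=\I_\alpha+\K(\ell_2)$ for $1\leq \alpha\leq \beta$, and $\J_{\beta+1}=\B$. Each is readily checked to be an ideal of $\B$ using $S_{m, n}\cdot P\B(\ell_2)P=0$, and axioms (a), (b) of Definition \ref{elem-ess} are inherited from $(\I_\alpha)_{\alpha\leq\beta}$ via the identity $\overline{\bigcup_{\alpha<\lambda}(\I_\alpha+\K(\ell_2))}=\I_\lambda+\K(\ell_2)$. The verifications for (c), (d) rely on two intersection identities:
$$\I_{\alpha+1}\cap(\I_\alpha+\K(\ell_2))=\I_\alpha\ (1\leq \alpha<\beta),\qquad C^*(\mathcal R)\cap(\I_\beta+\K(\ell_2))=\{0\}.$$
For the first, $\I_{\alpha+1}\cap \K(\ell_2)\subseteq P\B(\ell_2)P\cap \K(\ell_2)=P\K(\ell_2)P=\I_1\subseteq \I_\alpha$. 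For the second, any $r$ in the intersection has $P^\perp rP^\perp\in C^*(\mathcal S)\cap \K(\ell_2)=\{0\}$ (since each $S_{n, n}$ is a noncompact projection), and then injectivity of $P^\perp(-)P^\perp$ on $C^*(\mathcal R)$ yields $r=0$. Standard isomorphism theorems now give $\J_{\alpha+1}/\J_\alpha\cong \I_{\alpha+1}/\I_\alpha\cong \K(\ell_2)$ for $1\leq \alpha<\beta$ and $\B/\J_\beta\cong C^*(\mathcal R)\cong \K(\ell_2)$; the latter identification shows that $\mathcal R$ represents the $(\beta+1)$-th level of $\B$. Essentiality at $\alpha=0$ follows from essentiality of $\K(\ell_2)$ in $\B(\ell_2)\supseteq \B$, and at $\alpha=\beta$ it is trivial.

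The main obstacle is essentiality of $\J_{\alpha+1}/\J_\alpha$ in $\B/\J_\alpha$ for $1\leq \alpha<\beta$. Assume $z\in\B$ satisfies $z\cdot \J_{\alpha+1}\subseteq \J_\alpha$ and decompose $z=y+k+r$ with $y\in\I_\beta$, $k\in \K(\ell_2)$, $r\in C^*(\mathcal R)$. Using $S_{m, n}T_{\alpha+1, p, q}=0$, one verifies that $zT_{\alpha+1, m, n}-PzP\cdot T_{\alpha+1, m, n}\in \K(\ell_2)$, where $PzP=y+PkP+PrP\in \A$. Hence $PzP\cdot T_{\alpha+1, m, n}\in \A\cap \J_\alpha=\I_\alpha$ (the identity $\A\cap(\I_\alpha+\K(\ell_2))=\I_\alpha$ is proved as above). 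Since $\{\pi_\alpha(T_{\alpha+1, m, n})\}_{m, n}$ generates the essential ideal $\I_{\alpha+1}/\I_\alpha$ of $\A/\I_\alpha$, essentiality forces $\pi_\alpha(PzP)=0$, i.e., $PzP\in \I_\alpha\subseteq \I_\beta$. Projecting further by $\pi_\beta\colon \A\to \A/\I_\beta$ annihilates $y$ and $PkP$ and sends $PrP$ to the image of $r$ under the composite isomorphism $C^*(\mathcal R)\cong C^*(\{T_{\beta+1, m, n}\}_{m, n})\cong \I_{\beta+1}/\I_\beta$; hence $r=0$, so $z=y+k\in\I_\alpha+\K(\ell_2)=\J_\alpha$, as required.
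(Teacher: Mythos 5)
Your proof is correct, and it reaches the conclusion by a more hands-on route than the paper. Both arguments rest on the same structural fact, namely that $C^*(\mathcal R)=\{A+\Phi(A): A\in C^*(\mathcal T)\}$ for the isomorphism $\Phi(T_{\beta+1,m,n})=S_{m,n}$ (cross terms vanish by corner orthogonality), so that $\B=\I_\beta+\K(\ell_2)+C^*(\mathcal R)$. From there the paper takes a shortcut: it defines a single quotient isomorphism $\Psi\colon \B/\K(\ell_2)\to \A/P\K(\ell_2)P$ by $\Psi(A+\Phi(A)+B+\K(\ell_2))=A+B+P\K(\ell_2)P$, carries $\J_\alpha/\K(\ell_2)$ onto $\I_\alpha/P\K(\ell_2)P$ and $\mathcal R$ onto $\mathcal T$, and imports all of (a)--(d) from $\A$ in one stroke. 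You instead verify the axioms of Definition \ref{elem-ess} directly, which forces you to isolate the two intersection identities $\I_{\alpha+1}\cap(\I_\alpha+\K(\ell_2))=\I_\alpha$ and $C^*(\mathcal R)\cap(\I_\beta+\K(\ell_2))=\{0\}$ and to redo the essentiality argument at intermediate levels by compressing with $P$; your essentiality computation is exactly the content that the paper's $\Psi$ encapsulates. Your version is longer but more self-contained and makes explicit where each hypothesis is used. Two small remarks: the elements $T_{\alpha+1,m,n}$ for $\alpha<\beta$ are not supplied by the hypotheses of the lemma (only level $\beta+1$ has a given representing system), but your argument only needs $PzP\cdot w\in\I_\alpha$ for arbitrary $w\in\I_{\alpha+1}$, so nothing is lost; and in the last step the conclusion $z\in\J_\alpha$ uses not only $r=0$ but also the previously established $PzP=y+PkP\in\I_\alpha$, which gives $y\in\I_\alpha+\K(\ell_2)$ --- worth saying explicitly, though it is immediate.
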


\begin{proof}
Let $\Phi: C^*(\mathcal T)\rightarrow C^*(\mathcal S)$ be the isomorphism
such that $\Phi(T_{\beta+1, m, n})=S_{m, n}$. As $\mathcal T$ represent $\beta+1$-th
level of $\A$, each element of $\B$ is 
of the form $A+\Phi(A)+B+K$, where $A\in C^*(\mathcal T)$, $B\in \I_\beta$ and $K\in \K(\ell_2)$.
It follows that $\Psi: \B/\K(\ell_2)\rightarrow \A/P\K(\ell_2)P$ given by 
$$\Psi(A+\Phi(A)+B+\K(\ell_2))=A+B+P\K(\ell_2)P$$
is a well defined isomorphism such that $\Psi(R_{m, n}+\K(\ell_2))=T_{\beta+1, m, n}+P\K(\ell_2)P$ for
each $n, m\in \N$ and $\Psi[\I_\alpha+\K(\ell_2)]=\I_\alpha+P\K(\ell_2)P$ for
$\alpha\leq\beta$. So the properties of $\B$ follow from the properties of $\A$.

  \end{proof}

\begin{theorem}\label{pairing} 
Suppose that $\mathcal S=\{S_{\eta, \xi}: (\xi, \eta)\in \Lambda\}$ is a blockwise system of almost 
matrix units which is separated by a system of projections $(P_\alpha: \alpha<\omega_1)$. Then there is 
 $C^*$-algebra $\mathcal A\subseteq \B(\ell_2)$ with an elementary essential
  composition series $(\I_\alpha)_{\alpha\leq\omega_1}$
with a representing sequence $\mathcal T=(T_{\alpha+1, m, n}: n, m\in \N, \alpha<\omega_1)$
such that
 $\mathcal T$ dominates $\mathcal S$.
\end{theorem}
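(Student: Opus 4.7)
The plan is to build $\I_\alpha\subseteq\B(\ell_2)$ and matrix units $(T_{\alpha+1,m,n})_{m,n\in\N}$ by transfinite recursion on $\alpha<\omega_1$, preserving two invariants at each stage $\alpha$:
\begin{enumerate}[(i)]
\item $\I_\alpha$ is a stable separable AF $C^*$-subalgebra of $\B(\ell_2)$ containing $\K(\ell_2)$ (for $\alpha\ge 1$), with $(\I_\beta)_{\beta\le\alpha}$ an elementary essential composition series of $\I_\alpha$ represented by $(T_{\beta+1,m,n})_{\beta<\alpha}$, and $(*)$ holds for every $0<\beta<\alpha$.
\item $P_\alpha A P_\alpha - A\in\K(\ell_2)$ for every $A\in\I_\alpha$.
\end{enumerate}
At the end we set $\A=\overline{\bigcup_{\alpha<\omega_1}\I_\alpha}$ and $\I_{\omega_1}=\A$.

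At the base take $\I_0=\{0\}$, $\I_1=\K(\ell_2)$ with any standard system of matrix units $(T_{1,m,n})$; no condition is required at $\alpha=0$ by Definition \ref{def-domination}. At a countable limit $\lambda$ set $\I_\lambda=\overline{\bigcup_{\beta<\lambda}\I_\beta}$; stability is preserved by Hjelmborg--R\o rdam (result (4) in the introduction), and invariant (ii) transfers from the union using $P_\lambda P_\beta=^\K P_\beta$ (Definition \ref{blockwise-def}(3)) together with the norm-closedness of $\K(\ell_2)$.

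The successor step for $\alpha\ge 1$ is the main work. First, $P_\alpha$ has infinite rank, since by Definition \ref{blockwise-def}(4) it $\K$-dominates each of the infinitely many mutually orthogonal nonzero projections $S_{\lambda_0+n,\lambda_0+n}$ for $n\in\N$, forcing its range to be infinite-dimensional. Invariant (ii) lets us apply Lemma \ref{corner-composition} with $P=P_\alpha$: the corner $P_\alpha\I_\alpha P_\alpha\subseteq P_\alpha\B(\ell_2)P_\alpha$ carries the elementary essential composition series $(P_\alpha\I_\beta P_\alpha)_{\beta\le\alpha}$ with bottom level $\cong\K(\ell_2)$. Lemma \ref{stable-extension}, applied inside $P_\alpha\B(\ell_2)P_\alpha\cong\B(\ell_2)$, produces a stable extension $\A^{\mathrm{cor}}\subseteq P_\alpha\B(\ell_2)P_\alpha$ of $P_\alpha\I_\alpha P_\alpha$ by one more level, represented by matrix units $T^{\mathrm{cor}}_{m,n}\in P_\alpha\B(\ell_2)P_\alpha$. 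By Definition \ref{blockwise-def}(5) the system $\mathcal S_\alpha=\{S_{\lambda_\alpha+m,\lambda_\alpha+n}:m,n\in\N\}$ lies in $P_\alpha^\perp\B(\ell_2)P_\alpha^\perp$, so Lemma \ref{extend-scattered} applies to $\A^{\mathrm{cor}}$ with $P=P_\alpha$ and $\mathcal S=\mathcal S_\alpha$ to yield a $C^*$-subalgebra $\I_{\alpha+1}$ of $\B(\ell_2)$ with matrix units
\[
T_{\alpha+1,m,n}=T^{\mathrm{cor}}_{m,n}+S_{\lambda_\alpha+m,\lambda_\alpha+n}
\]
representing the new top level. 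Condition $(*)$ is immediate because $T^{\mathrm{cor}}_{m,n}$ lives in the $P_\alpha$-corner while $S_{\lambda_\alpha+m,\lambda_\alpha+n}$ lives in the complementary one. Invariant (i) for $\I_{\alpha+1}$ comes from Lemma \ref{extend-scattered} together with results (1) and (3) of the introduction, which preserve AF-ness and stability along the extension $0\to\I_\alpha\to\I_{\alpha+1}\to\K(\ell_2)\to 0$. Invariant (ii) at $\alpha+1$ follows by computing $P_{\alpha+1}T_{\alpha+1,m,n}P_{\alpha+1}=^\K T_{\alpha+1,m,n}$ via Definition \ref{blockwise-def}(3),(4) on each of the two summands, and then extending to all of $\I_{\alpha+1}$ by density and norm-closure of $\K(\ell_2)$.

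The main obstacle is precisely the successor step: we must coordinate Lemmas \ref{corner-composition}, \ref{stable-extension}, and \ref{extend-scattered} so that the abstract one-level extension of $\I_\alpha$ is first pulled into the corner $P_\alpha\B(\ell_2)P_\alpha$ and then glued with $\mathcal S_\alpha$ in the complementary corner, all while preserving the composition-series structure and invariant (ii). Essentiality of $\I_{\alpha+1}/\I_\alpha$ in the eventual $\A/\I_\alpha$ (not merely in each intermediate $\I_\beta/\I_\alpha$) is inherited at the end from density of $\bigcup_{\beta>\alpha}\I_\beta$ in $\A$ and the standard fact that an ideal remains essential under norm-closed directed unions of superalgebras in which it is essential.
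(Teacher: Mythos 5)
Your proposal is correct and follows essentially the same route as the paper: transfinite recursion with $\I_1=\K(\ell_2)$, closures at limits, and at successors the combination of Lemma \ref{corner-composition} (compress to the $P_\alpha$-corner), Lemma \ref{stable-extension} (add one stable level inside $P_\alpha\B(\ell_2)P_\alpha$), and Lemma \ref{extend-scattered} (glue in $\mathcal S_\alpha$ on the complementary corner), defining $T_{\alpha+1,m,n}$ as the sum of the corner matrix unit and $S_{\lambda_\alpha+m,\lambda_\alpha+n}$. Your explicit bookkeeping of the invariant $A-P_\alpha AP_\alpha\in\K(\ell_2)$, of the infinite rank of $P_\alpha$, and of essentiality passing to the final directed union only makes explicit points the paper leaves implicit.
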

\begin{proof} 
By recursion on $\beta<\omega_1$ we build  operators $(T_{\alpha+1, m, n}: n, m\in \N, \alpha<\beta)$ and 
stable $C^*$-algebras $\I_\beta\subseteq \B(\ell_2)$  with  an elementary essential
  composition series $(\I_\alpha)_{\alpha\leq\beta}$,
such that $(T_{\alpha+1, m, n}: n, m\in \N, \alpha<\beta)$
 forms  a representing sequence  for   $\I_\beta$ satisfying $(*)$ from Definition \ref{def-domination}. Then 
 $\A = \bigcup_{\alpha<\omega_1} \I_\alpha$ is the required algebra.
 
Let $(T_{1, m, n}: n, m\in \N)$  be the standard system of matrix units of $\K(\ell_2)$
and let $\I_1=\K(\ell_2)$. If $\beta$ is a countable limit ordinal, then $\I_\beta$
is the closure of the union of $\I_\alpha$s for $\alpha<\beta$.

Suppose  now that $(T_{\alpha+1, m, n}: n, m\in \N, \alpha<\beta)$
 and $\I_\beta$ are constructed for $\beta<\omega_1$ and the condition of  Definition \ref{def-domination} is satisfied, i.e., 
for every $0<\alpha<\beta$ we have
$$T_{\alpha+1, m, n}= P_\alpha T_{\alpha+1, m, n} P_\alpha+S_{\lambda_\alpha+m, \lambda_\alpha+n} \leqno(*)$$
for each $n, m\in \N$. We construct $\I_{\beta+1}$ and $(T_{\beta+1, m,n}: n,m\in \N)$ as follows.
 Definition \ref{blockwise-def} (3)-(5) and the above inductive
requirement $(*)$  for every $\alpha<\beta$,   imply that $A-P_\beta AP_\beta\in\K(\ell_2)$
for every $A\in \I_\beta$.
 It follows from Lemma \ref{corner-composition} that $P_\beta\I_\beta P_\beta$ is a 
$C^*$-algebra with
an essential elementary composition series 
$(P_\beta\I_\alpha P_\beta)_{\alpha\leq\beta}$ and  $\I_\beta$ is generated by $P_\beta\I_\beta P_\beta$ and $\K(\ell_2)$.
Since $\K(\ell_2)=\I_1\subseteq \I_\beta$, for each $\alpha\leq\beta$ we have that
$$\I_\alpha=P_\beta\I_\alpha P_\beta+\K(\ell_2).\leqno (**)$$
As observed after (1) -(4) of the Introduction the separable ideals of an elementary essential composition
series are stable and so is $P_\beta\I_\beta P_\beta$.
Now working with $P_\beta\I_\beta P_\beta$  inside $P_\beta\B(\ell_2) P_\beta$
apply Lemma \ref{stable-extension} to obtain  a  stable  $C^*$-algebra $\I_{\beta+1}'\subseteq 
P_\beta\B(\ell_2) P_\beta$ 
with an elementary essential composition series
$(\J_\alpha)_{\alpha\leq\beta+1}$ satisfying $\J_\alpha=P_\beta\I_\alpha P_\beta$ for $\alpha\leq\beta$
and  with a sequence $\{T_{ m, n}': n, m\in \N\}$
representing the $\beta+1$-th level of $\J_{\beta+1}$.
Now for each $n, m\in\N$ define
$$T_{\beta+1, m, n}= T'_{ m, n} +S_{\lambda_\beta+m, \lambda_\beta+n},$$
so $(*)$ of Definition \ref{def-domination} is satisfied.
By Definition \ref{blockwise-def} (5) we have 
$P_\beta^\perp S_{\lambda_\beta+m, \lambda_\beta+n}P_\beta^\perp=
S_{\lambda_\beta+m, \lambda_\beta+n}$, so we are in the position of applying Lemma 
\ref{extend-scattered} to obtain the required $\I_{\beta+1}$. Note that
by $(**)$ and Lemma \ref{extend-scattered} the essential elementary composition series
of $\I_{\beta+1}$ agrees with that of $\I_\beta$, in particular, its $\beta$-th element
is $\I_\beta$ and its representing sequence is $(T_{\alpha+1, m, n}: n, m\in \N, \alpha<\beta+1)$.
This finishes the construction and the proof.
\end{proof}

\section{The nonstability}
In the previous section we showed that for any  blockwise system of
almost matrix units $\mathcal S$ of size $\omega_1$ which is separated by a system of projections,
 we can find a   $C^*$-algebra $\A$ with an essential elementary composition series
 of length $\omega_1$   with a representing sequence
which dominates $\mathcal S$. In this section we will show that if $\mathcal S$ above is a Luzin blockwise system of
almost matrix units, then such $C^*$-algebra $\A$ is not stable. 

\begin{lemma}\label{orthogonal-sequence} Suppose that $\mathcal A$ is a stable  
$C^*$-algebra with an essential elementary composition series $(\I_\alpha)_{\alpha\leq\omega_1}$. Then there 
is a sequence of projections  $(R_\alpha, Q_\alpha: \alpha<\omega_1)\subseteq
\mathcal A$ such that $ht(R_\alpha)=ht(Q_\alpha)=\alpha+1$ 
  for every $\alpha<\omega_1$ and 
we have
$$R_{\alpha_1}Q_{\alpha_2}=0$$
for every $\alpha_1, \alpha_2<\omega_1$.
\end{lemma}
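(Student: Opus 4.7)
The plan is to exploit stability directly: since $\A\cong \A\otimes \K(\ell_2)$, two orthogonal rank-one projections $e_1, e_2\in \K(\ell_2)$ let us split each ideal $\I_\alpha$ into two orthogonal ``copies,'' from which we can read off $R_\alpha$ and $Q_\alpha$. First I would fix an isomorphism $\Phi\colon \A\to \A\otimes \K(\ell_2)$ witnessing stability. By Proposition 5.3 of \cite{cb}, the elementary essential composition series of $\A\otimes \K(\ell_2)$ is $(\I_\alpha\otimes \K(\ell_2))_{\alpha\leq\omega_1}$, and by Lemma \ref{series-scattered} such a series coincides with the intrinsic Cantor--Bendixson series of the scattered algebra, hence is preserved by any $*$-isomorphism. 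Therefore $\Phi[\I_\alpha]=\I_\alpha\otimes \K(\ell_2)$ for every $\alpha\leq\omega_1$.

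Next, for each $\alpha<\omega_1$ I would pick a projection $p_\alpha\in \I_{\alpha+1}\setminus \I_\alpha$. Such a projection exists because $\I_{\alpha+1}/\I_\alpha\cong \K(\ell_2)$ contains projections, and $\I_{\alpha+1}$ is a separable stable AF-algebra (as remarked after items (1)--(4) of the introduction), hence has real rank zero, so projections lift from any quotient by a closed ideal. Now define
$$R_\alpha=\Phi^{-1}(p_\alpha\otimes e_1),\qquad Q_\alpha=\Phi^{-1}(p_\alpha\otimes e_2)$$
for every $\alpha<\omega_1$. These are projections in $\A$, and for any $\alpha_1,\alpha_2<\omega_1$
$$R_{\alpha_1}Q_{\alpha_2}=\Phi^{-1}\bigl((p_{\alpha_1}\otimes e_1)(p_{\alpha_2}\otimes e_2)\bigr)=\Phi^{-1}\bigl(p_{\alpha_1}p_{\alpha_2}\otimes e_1e_2\bigr)=0$$
since $e_1e_2=0$. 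For the height, $p_\alpha\otimes e_1\in \I_{\alpha+1}\otimes \K(\ell_2)=\Phi[\I_{\alpha+1}]$, so $R_\alpha\in \I_{\alpha+1}$; moreover the image of $p_\alpha\otimes e_1$ in the quotient
$(\I_{\alpha+1}\otimes \K(\ell_2))/(\I_\alpha\otimes \K(\ell_2))\cong (\I_{\alpha+1}/\I_\alpha)\otimes \K(\ell_2)$
equals $[p_\alpha]\otimes e_1\neq 0$, so $R_\alpha\notin \I_\alpha$, i.e.\ $ht(R_\alpha)=\alpha+1$. The same reasoning gives $ht(Q_\alpha)=\alpha+1$.

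The main technical input is verifying that the stability isomorphism $\Phi$ genuinely respects the composition series; this is the uniqueness argument in the first paragraph based on Proposition 5.3 of \cite{cb} and Lemma \ref{series-scattered}. Once this identification is in place, the construction is immediate from the tensor product structure. The lifting of $p_\alpha$ is standard for AF-algebras, but if one wishes to avoid invoking real rank zero one can instead use scatteredness: take any minimal projection of $\A/\I_\alpha$, which lies in the essential ideal $\I_{\alpha+1}/\I_\alpha$, and lift it to a minimal projection of $\A$ of height $\alpha+1$.
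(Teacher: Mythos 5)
Your proof is correct and follows essentially the same route as the paper: lift a projection from $\I_{\alpha+1}/\I_\alpha$ to a projection in $\I_{\alpha+1}$ (using the AF structure), tensor it with two orthogonal projections of $\K(\ell_2)$, and pull back through the stability isomorphism, using Proposition 5.3 of \cite{cb} and Lemma \ref{series-scattered} to control the heights. Your explicit verification that the stability isomorphism carries $\I_\alpha$ onto $\I_\alpha\otimes\K(\ell_2)$ is a point the paper leaves implicit, but it is the same argument.
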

\begin{proof}
Since $\A$ is stable we have  $\mathcal A\cong \A\otimes\K(\ell_2)$.
As $\I_{\alpha+1}/\I_\alpha\cong\K(\ell_2)$, there are projections in $\I_{\alpha+1}/\I_\alpha$, so
let  $R_\alpha''\in \I_{\alpha+1}$ be 
a lifting of such a  projection. We can choose this lifting to be
a projection since each $\I_\alpha$ is a separable AF-ideal (see 
Lemma III. 6. 1. of \cite{davidson}).
Let $R, Q$ be two orthogonal projections in $\K(\ell_2)$. Put
$R_\alpha'=R_\alpha''\otimes R$ and $Q_\alpha'=R_\alpha''\otimes Q$ for every $\alpha<\omega_1$. Clearly
$R_{\alpha_1}'Q_{\alpha_2}'=0$ for every $\alpha_1, \alpha_2<\omega_1$.
Also $ht(R_\alpha')=ht(Q_\alpha')=\alpha+1$, by Proposition 5.3 of \cite{cb} and Lemma \ref{series-scattered}. 
So $R_\alpha$ and $Q_\alpha$ obtained from $R_\alpha'$ and $Q_\alpha'$ via the isomorphism between
$\A$ and $\A\otimes \K(\ell_2)$,
 satisfy the lemma.
\end{proof}

\begin{theorem}\label{main} Suppose that $\A$ is  $C^*$-algebra
with an essential elementary composition series $(\I_\alpha)_{\alpha\leq\omega_1}$ 
with a representing sequence that  dominates a Luzin blockwise system of
almost matrix units. Then $\A$ is not stable.
\end{theorem}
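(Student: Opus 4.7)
The plan is to argue by contradiction. Suppose $\A$ is stable. Lemma \ref{orthogonal-sequence} produces projections $R_\alpha, Q_\alpha \in \A$ with $ht(R_\alpha)=ht(Q_\alpha)=\alpha+1$ for $\alpha<\omega_1$ and $R_{\alpha_1}Q_{\alpha_2}=0$ for every $\alpha_1,\alpha_2$. For each $0<\alpha<\omega_1$, Lemma \ref{lemma-domination} provides
\[
R_\alpha = P_\alpha R_\alpha P_\alpha + C_\alpha + K_\alpha, \qquad Q_\alpha = P_\alpha Q_\alpha P_\alpha + D_\alpha + L_\alpha,
\]
with $C_\alpha, D_\alpha \in C^*(\mathcal S_\alpha)\setminus\{0\}$ and $K_\alpha, L_\alpha \in \K(\ell_2)$. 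Using $P_\alpha C_\alpha = C_\alpha P_\alpha = 0$ from Definition \ref{blockwise-def}(5), squaring $R_\alpha = R_\alpha^2$ and separating the orthogonal corners $P_\alpha\B(\ell_2)P_\alpha$ and $P_\alpha^\perp\B(\ell_2)P_\alpha^\perp$ gives $C_\alpha^2 =^\K C_\alpha$, and hence $R_\alpha C_\alpha =^\K C_\alpha$; analogously $Q_\beta D_\beta =^\K D_\beta$. Since $C^*(\mathcal S_\alpha)\cap \K(\ell_2)=\{0\}$, the Calkin quotient is isometric on $C^*(\mathcal S_\alpha)$ and identifies $C_\alpha$ with a nonzero projection, so $\|C_\alpha\|=\|D_\beta\|=1$.

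The geometric heart of the proof is to let $H:=\overline{\text{span}}\big(\bigcup_{\alpha<\omega_1}\text{range}(R_\alpha)\big)$ and let $P$ be the orthogonal projection in $\B(\ell_2)$ onto $H$. The \emph{strict} equality $R_\alpha Q_\beta = 0$ yields $\text{range}(Q_\beta)\perp \text{range}(R_\alpha)$ for every $\alpha$, hence $\text{range}(Q_\beta)\perp H$; thus $PR_\alpha = R_\alpha$ and $PQ_\beta = 0$ hold strictly in $\B(\ell_2)$. Combining these with $R_\alpha C_\alpha =^\K C_\alpha$ and $Q_\beta D_\beta =^\K D_\beta$ yields
\[
PC_\alpha =^\K C_\alpha \qquad \text{and} \qquad PD_\beta =^\K 0 \qquad \text{for all } \alpha,\beta \in \omega_1\setminus\{0\}.
\]

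Finally, I would replay the argument from the projection-form Luzin theorem in the introduction. Define the compacts $V_\alpha := C_\alpha^*P - C_\alpha^*$ and $U_\beta := -PD_\beta$, which give $(C_\alpha^*+V_\alpha)(D_\beta+U_\beta) = C_\alpha^* P(1-P) D_\beta = 0$ for every $\alpha,\beta$. Using that $\{V_\alpha\}, \{U_\beta\}$ are uncountable subsets of the separable $\K(\ell_2)$, one passes to uncountable $X, Y \subseteq \omega_1\setminus\{0\}$ and fixes $W_1, W_2 \in \K(\ell_2)$ with $\|V_\alpha + W_1\|, \|U_\beta + W_2\| < \delta$ on $X\times Y$. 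A direct estimate using $\|C_\alpha^*P\|, \|(1-P)D_\beta\| \leq 1$ then gives $\|(C_\alpha^*-W_1)(D_\beta-W_2)\| \leq 2\delta + \delta^2$ uniformly on $X\times Y$, contradicting Definition \ref{blockwise-luzin-def} applied with $A_\alpha := C_\alpha^*$, $B_\beta := D_\beta$, $\varepsilon := 1/2$ once $\delta$ is small enough. The main obstacle I expect is securing the strict equalities $PR_\alpha = R_\alpha$ and $PQ_\beta = 0$: they rely on the exact orthogonality $R_\alpha Q_\beta = 0$ (not merely modulo compacts), and they are what allow the Hilbert-space projection $P$ to witness, inside $\B(\ell_2)$, the algebraic separation $PC_\alpha =^\K C_\alpha$, $PD_\beta =^\K 0$ forbidden by the Luzin property.
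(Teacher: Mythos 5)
Your argument is correct. It follows the same overall strategy as the paper --- stability gives the mutually orthogonal projections $R_\alpha,Q_\beta$ of Lemma \ref{orthogonal-sequence}, Lemma \ref{lemma-domination} extracts nonzero block components in $C^*(\mathcal S_\alpha)$, and separability of $\K(\ell_2)$ plus the Luzin property of Definition \ref{blockwise-luzin-def} yields the contradiction --- but the middle mechanism is genuinely different. The paper never builds a global projection: it simply compresses the exact identity $R_\alpha Q_\beta=0$ by $P_\alpha^\perp(\cdot)P_\beta^\perp$, which (by Definition \ref{blockwise-def}(5)) strips off the $P_\alpha R_\alpha P_\alpha$ and $P_\beta Q_\beta P_\beta$ parts and leaves $\|(A_\alpha+P_\alpha^\perp F_\alpha)(B_\beta+G_\beta P_\beta^\perp)\|=0$, after which one stabilizes the compact tails by pigeonhole and applies the Luzin property directly; it also does not need the components to be projections, only a uniform positive lower bound on their norms over an uncountable set (again by pigeonhole). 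You instead first upgrade the components: from $R_\alpha^2=R_\alpha$ and $C^*(\mathcal S_\alpha)\cap\K(\ell_2)=\{0\}$ you get that $C_\alpha$ is exactly a nonzero projection (note the intersection being trivial also forces $C_\alpha=C_\alpha^*$ and $C_\alpha^2=C_\alpha$ on the nose, not just modulo compacts), which buys you the normalization $\|C_\alpha\|=\|D_\beta\|=1$ without a pigeonhole on norms; you then introduce the projection $P$ onto $\overline{\mathrm{span}}\bigl(\bigcup_\alpha \mathrm{range}(R_\alpha)\bigr)$, for which the strict orthogonality $R_\alpha Q_\beta=0$ gives the exact relations $PR_\alpha=R_\alpha$, $PQ_\beta=0$, hence $PC_\alpha=^\K C_\alpha$ and $PD_\beta=^\K 0$, and you conclude by replaying the $(\perp)$-argument from the introduction. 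Your route is slightly longer but makes explicit that the nonstability is an instance of the ``no separating projection'' phenomenon already isolated there; the paper's route is shorter and avoids the auxiliary projection entirely. All the individual steps you sketch check out (in particular $(P_\alpha R_\alpha P_\alpha)C_\alpha=0$ since $C_\alpha=P_\alpha^\perp C_\alpha P_\alpha^\perp$, and $\|C_\alpha^*P\|,\|(1-P)D_\beta\|\le 1$ give the $2\delta+\delta^2$ bound), so there is no gap.
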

\begin{proof} 
Let   $\mathcal T_\A=(T_{\alpha+1, m, n}: n, m\in \N, \alpha<\omega_1)\subseteq B(\ell_2)$  be
a representing sequence of $\A$ (Definition \ref{def-representing}) which
 dominates (Definition \ref{def-domination})  a Luzin blockwise  system of
almost matrix units $\mathcal S=\{S_{\eta, \xi}: (\xi, \eta)\in \Lambda\}$  which is separated by a family of projection $(P_\alpha: \alpha<\omega_1)$
(Definition \ref{blockwise-def}).
We will derive
 a contradiction from the hypothesis that $\A$ is stable.

By Lemma \ref{orthogonal-sequence}  there 
is a sequence $(R_\alpha, Q_\alpha: \alpha<\omega_1)\subseteq
\mathcal A$ of projections such that $ht(R_\alpha)=ht( Q_\alpha )=\alpha+1$ 
 for every $\alpha<\omega_1$ and 
 $R_{\alpha}Q_{\beta}=0$
for every $\alpha<\beta<\omega_1$. 

By Lemma \ref{lemma-domination} we have
$$R_\alpha=A_\alpha+ P_\alpha R_\alpha  P_\alpha+F_\alpha, \ \  Q_\alpha =B_\alpha+  P_\alpha  Q_\alpha   P_\alpha+G_\alpha,$$
where $F_\alpha, G_\alpha$ are compact operators and
$A_\alpha, B_\alpha\in  C^*(S_{\alpha})$.  Note that
$\|A_\alpha\|, \|B_\alpha\|>0$ for all $0<\alpha<\omega_1$ as $ht(R_\alpha)=ht( Q_\alpha )=\alpha+1$.

  So by passing
to an uncountable subset $X\subseteq\omega_1$, we may assume that $M>\|A_\alpha\|, \|B_\alpha\|>2\sqrt{\varepsilon}$
and $M>\|F_\alpha\|, \|G_\alpha\|$
for each $\alpha\in X$ and some $M>1$ and $0<\varepsilon<1$. Passing  further down to an  uncountable subset of $X$ 
and using the separability of $\K(\ell_2)$ we may assume
that there are compact operators  $F, G$ such that $\|P_\alpha^\perp F_\alpha-F\|, \|P_\alpha^\perp G_\alpha-G\|<\varepsilon/2M$ 
and $\|F\|, \|G\|<M$
for every $\alpha\in X$. For every $\alpha, \beta\in X$ we have
\begin{align*}
0&=\|R_\alpha Q_\beta\|\geq \|P_\alpha^\perp R_\alpha Q_\beta P_\beta^\perp\|\\
& = \|P_\alpha^\perp(A_\alpha+F_\alpha)(B_\beta+ G_\beta)P_\beta^\perp\|
=\|(A_\alpha+P_\alpha^\perp F_\alpha)(B_\beta+ G_\beta P_\beta^\perp)\|  \\
&\geq\|(A_\alpha+F)(B_\beta+G)\| -\|(P_\alpha^\perp F_\alpha-F)(B_\beta+G)\|  \\ 
 &- \|(A_\alpha+F)(G_\beta P_\beta^\perp -G)\| -\|(P_\alpha^\perp F_\alpha-F)(G_\beta P_\beta^\perp -G)\| \\
&\geq \|(A_\alpha+F)(B_\beta+G)\|
-2\varepsilon-{\varepsilon^2\over M^2}\\
&>\|(A_\alpha+F)(B_\beta+G)\|-3\varepsilon.
\end{align*}

Therefore $\|(A_\alpha+F)(B_\beta+G)\|<3\varepsilon$ for every $\alpha, \beta\in X$.
However, by the Luzin property (Definition \ref{blockwise-luzin-def}) we can  find $\alpha, \beta\in X$ such that
$$\|(A_\alpha+F)(B_\beta+G)\|\geq\|A_\alpha\|\|B_\beta\|-\varepsilon>(2\sqrt{\varepsilon})^2-\varepsilon=3\varepsilon$$
which gives the required contradiction.
Therefore $\A$ is not stable.
\end{proof}

\bibliographystyle{amsplain}

\end{document}